
\documentclass{ws-ijcm}
\usepackage{subfigure}
\usepackage{epsfig}

\begin{document}

%
\catchline{}{}{}{}{}
%

\markboth{Xianru Chen}
{Stable approximation of functions via Jacobi frames}

\title{STABLE APPROXIMATION OF FUNCTIONS FROM EQUISPACED SAMPLES VIA JACOBI FRAMES}

\author{Xianru Chen \footnote{Corresponding author.}}

\address{School of Mathematics and Statistics \\
Huazhong University of Science and Technology \\
Wuhan 430074, People's Republic of China  \\
\email{chenxianru@hust.edu.cn }
}


\maketitle


\begin{abstract}
In this paper, we study the Jacobi frame approximation with equispaced samples and derive an error estimation. We observe numerically that the approximation accuracy gradually decreases as the extended domain parameter $\gamma$ increases in the uniform norm, especially for differentiable functions. In addition, we show that when the indexes of Jacobi polynomials $\alpha$ and $\beta$ are larger (for example $\max\{\alpha,\beta\} > 10$), it leads to a divergence behavior on the frame approximation error decay.
\end{abstract}

\keywords{Jacobi polynomial; frame; analytic function; differentiable function.}

\section{Introduction}
It is well-known that polynomial interpolation of functions at $m+1$ equispaced nodes will lead to the Runge's phenomenon, and many methods have been proposed to overcome the Runge's phenomenon; see  \cite{Adcock2016AMP,2009Exponentially} and reference therein. However, all these methods can not circumvent the conclusion of the impossibility theorem which was proved in \cite{Trefethen2011}, that is, any approximation procedure that achieves exponential convergence at a geometric rate must also be exponentially ill-conditioned at a geometric rate. Furthermore, as shown in \cite{Trefethen2011}, the best possible rate of convergence of a stable method is root-exponential in $m$.

Recently, Adcock, Huybrechs and Shadrin proposed an approach termed polynomial frame approximation \cite{multivariate2020,BENADCOCK}. For some fixed $\gamma>1$, polynomial frame approximation uses orthogonal polynomials on an extended interval $[-\gamma,\gamma]$ to construct an approximation to a function over $[-1,1]$. This method leads to an ill-conditioned least-squares problem, while Adcock and Shadrin showed that this problem can be computed accurately via a regularized singular value decomposition (SVD) at a set of $m+1$ linear oversampling equispaced nodes on $[-1,1]$, and proved that the regularized frame approximation operator is well-conditioned \cite{BENADCOCK}. Further, the two authors also showed that the exponential decay of the polynomial frame approximation error down to a finite user-determined tolerance $\varepsilon>0$ is indeed possible for functions that are analytic in a sufficiently large region. In other words, Adcock and Shadrin asserted the possibility of fast and stable approximation of analytic functions from equispaced samples \cite{BENADCOCK}.

When studying the theoretical analysis of polynomial frame approximation, Adcock and Shadrin adopted the Legendre polynomials for convenience \cite{BENADCOCK}. To explore the generality of the polynomial frame approximation, we next consider the use of Jacobi polynomials as well as their special cases, including Chebyshev, Legendre and Gegenbauer polynomials, are widely used in many branches of scientific computing such as approximation theory, Gauss-type quadrature and spectral methods for differential and integral equations (see, e.g., \cite{canuto2006spectral,hesthaven2007spectral,shen2011spectral,szego1975orthogonal}).
Among these applications, Jacobi polynomials are particularly appealing owing to their superior properties: (i) they have excellent error properties in the approximation of a globally smooth function; (ii) quadrature rules based on their zeros or extrema are optimal in the sense of maximizing the exactness of polynomials.

In this paper, we derive the Jacobi frame approximation error bound in Sec. 3 and we focus on the numerical experiments with various extended domain parameter $\gamma$ in Sec. 4, in particular for differentiable functions. The Jacobi frame approximation accuracy will be gradually lost as $\gamma$ increases, and it can be found that the higher the smoothness of the approximated function, the more obvious the loss of approximation accuracy. Further, we also observe numerically that when the parameters of Jacobi polynomials $\alpha$, $\beta$ are larger, for example $\mu=\max\{\alpha,\beta\} > 10$, the approximation error will become worse or even divergent.

The paper is organized as follows. In Sec. 2 we state the Jacobi frame approximation, and we derive an approximation result in Sec. 3. We then present a large number of numerical experiments of analytic functions and differentiable functions in Sec. 4.

\section{Preliminaries}

\subsection{Notations}
Let $\mathbb{P}_n$ denotes the space of polynomials of degree at most $n$ and $C(I)$ denotes the space of continuous functions on interval $I$. We define the uniform norm over $I$ as
$$\| g \|_{I,\infty} = \sup_{x \in I}|g(x)|, \quad g \in C(I).$$
For weight function
\begin{equation}\label{jacobiweight}
w^{(\alpha,\beta)}(x) = (1-x)^\alpha(1+x)^\beta, \quad \alpha,\beta> -1,
\end{equation}
we let
\begin{equation}
\langle f,g\rangle_{I,w^{(\alpha,\beta)}} = \int_I f(x)\overline{g(x)} w^{(\alpha,\beta)}(x)dx,
\quad f,g \in C(I),
\end{equation}
be the usual $L^2$-$w^{(\alpha,\beta)}$ inner product over $I$ and
$\|\cdot\|_{I,w^{(\alpha,\beta)}} = \sqrt{\langle \cdot,\cdot \rangle_{I,w^{(\alpha,\beta)}}}$ be the corresponding $L^2$-$w^{(\alpha,\beta)}$ norm.

Next, we define the discrete semi-norms and semi-inner products. For $m\geq1$, we take
$\{x_k\}^m_{k=0}$ as the $m+1$ equispaced points in $I=[-1,1]$ including endpoints. We let
\begin{equation}
\| g \|_{m,\infty} = \sup_{k = 0,...,m}|g(x_k)|, \quad g \in C(I),
\end{equation}
where $x_k = -1+2k/m $ is the equispaced grid and
\begin{equation}
\langle f,g\rangle_{m,2}
= \frac{2}{m+1} \sum^m_{k=0} f(x_k)\overline{g(x_k)}, \quad f,g \in C(I).
\end{equation}
We also let
$\|\cdot\|_{m,2} = \sqrt{\langle \cdot,\cdot \rangle_{m,2}}$ be the corresponding $L^2$ discrete semi-norm. Note that  $\|\cdot\|_{m,2}$ is an inner product on  $\mathbb{P}_n$ for any $m\geq n$. Observe that
\begin{equation}\label{norminequality}
\sqrt{2/(m+1)} \|f\|_{m,\infty}  \leq \|f\|_{m,2}
 \leq \sqrt{2}  \|f\|_{m,\infty} \leq \sqrt{2}  \|f\|_{[-1,1],\infty}, \quad f \in C(I).
\end{equation}

In this paper, we consider following families of mappings
$$\mathcal{Q}_m(f): C([-1,1])\rightarrow C([-1,1]), \quad f \in C([-1,1]),$$
where $\mathcal{Q}_m$ depends only on the values $\{f(x_i)\}^m_{i=0}$ of $f$ on the equispaced grids $\{x_i\}$ for each $m\geq 1$. Then in terms of the continuous and discrete uniform norms, we define the condition number of $\mathcal{Q}_m$ as
\begin{equation}\label{conditionnumber}
\kappa(\mathcal{Q}_m) = \sup_{f \in C([-1,1])} \lim_{\delta\rightarrow 0^+}
\sup_{\substack{q \in C([-1,1]) \\ 0\leq \|q\|_{m,\infty}\leq\delta}}
\frac{\|\mathcal{Q}_m(f+q)-\mathcal{Q}_m(f)\|_{[-1,1],\infty}}{\|q\|_{m,\infty}}.
\end{equation}
Finally, given a compact set $E \in \mathbb{C}$, we write $B(E)$ for the set of functions that are continuous on $E$ and analytic in its interior. We also define $\|f\|_{E,\infty}=\sup_{z\in E}|f(z)|$.

\subsection{Jacobi frame approximation}
We now describe polynomial frame approximation. Let $i\geq0$ be an integer and let $P_i^{(\alpha,\beta)}(x)$ denote the Jacobi polynomial of degree $i$ which is normalized by
$$ P_i^{(\alpha,\beta)}(1)=\binom{i+\alpha}{i}.$$
The sequence of Jacobi polynomials $\{P_i^{(\alpha,\beta)}(x)\}_{i=0}^\infty$ forms a system of polynomials orthogonal over the interval $[-1,1]$ with respect to weight function $w^{(\alpha,\beta)}(x)$ in \eqref{jacobiweight}, and
\begin{equation}\label{jacobiortho}
\int_{-1}^{1}  P_i^{(\alpha,\beta)}(x) P_j^{(\alpha,\beta)}(x)
w^{(\alpha,\beta)}(x)\mathrm{d}x = h_i^{(\alpha,\beta)}
\delta_{ij},
\end{equation}
where $\delta_{ij}$ is the Kronecker delta and
\begin{align}\label{asymorthoconst}
h_i^{(\alpha,\beta)} = \frac{2^{\alpha+\beta+1} \Gamma(i+\alpha+1)
\Gamma(i+\beta+1)}{(2i+\alpha+\beta+1) i! \Gamma(i+\alpha+\beta+1)},
\end{align}
and $h_i^{(\alpha,\beta)} \sim \mathcal{O}(i^{-1})$ as $ i\rightarrow \infty$.
Moreover, let $\mu= \max\{\alpha,\beta\}$, we know that \cite[Th~7.32.1]{szego1975orthogonal}
\begin{align}\label{maximumJacobi}
\|P_i^{(\alpha,\beta)}\|_{[-1,1],\infty} \sim
\left\{
\begin{aligned}
& O(n^\mu), \quad\quad \mu \geq-1/2,   \\
& O(n^{-1/2}), \quad\quad \mu< -1/2.
\end{aligned}
\right.
\end{align}

Note that an orthonormal basis on $[-\gamma,\gamma]$ fails to constitute a basis when restricted to the smaller interval $[-1,1]$, it forms the so-called polynomial frame, \cite{FCM2014,SIAMRev2019,Christensen2016}. Hence we define the Jacobi frames
 $\{\varphi^{(\alpha,\beta)}_i (x)\}^\infty_{i=0}$ as
\begin{align}\label{jacobiframe}
\varphi^{(\alpha,\beta)}_i (x) =
P_i^{(\alpha,\beta)}(x/\gamma) / \sqrt{\gamma h_i^{(\alpha,\beta)}}, \quad \gamma>1, \quad x\in [-1,1].
\end{align}
For a function $f\in C([-1,1])$, the aim of Jacobi frame approximation is to compute a polynomial approximation to $f$ of the form
$$f \approx \mathcal{J}^{\alpha,\beta}_n(f) := \sum^n_{i=0} a_i \varphi^{(\alpha,\beta)}_i (x) \in \mathbb{P}_n$$
for suitable coefficients $\{a_i\}^n_{i=0}$. The coefficients satisfy
\begin{align}\label{coefficients}
\textbf{a} = \{a_i\}^n_{i=0} \in  \mathop{\arg\min}_{\forall \mathbf{a} \in \mathbf{C}^{n+1}}
 \|A\mathbf{a} -\mathbf{b}\|_2,
\end{align}
where
$$ A \in \mathbb{C}^{(m+1)\times (n+1)}, \quad
A_{k,j} =\sqrt{2/(m+1)}\varphi^{(\alpha,\beta)}_j (x_k) , \quad 0 \leq j \leq n, \quad 0\leq k \leq m,$$
$$ \textbf{b} \in \mathbb{C}^{m+1}, \quad \textbf{b}_k = \sqrt{2/(m+1)} f(x_k), \quad 0\leq k \leq m,$$
and we let $m=\eta n$, $\eta>1$. Note that the normalization factor $\sqrt{2/(m+1)}$ is included for convenience. In fact, matrix $A$ has exponentially decaying singular values, and the least-squares problem \eqref{coefficients} is ill-conditioned for large $n$, even when $m \gg n$, due to the use of a frame rather than a basis \cite{SIAMRev2019}; see Figure~\ref{one}.

\begin{figure}[th]
\centering
{\psfig{file=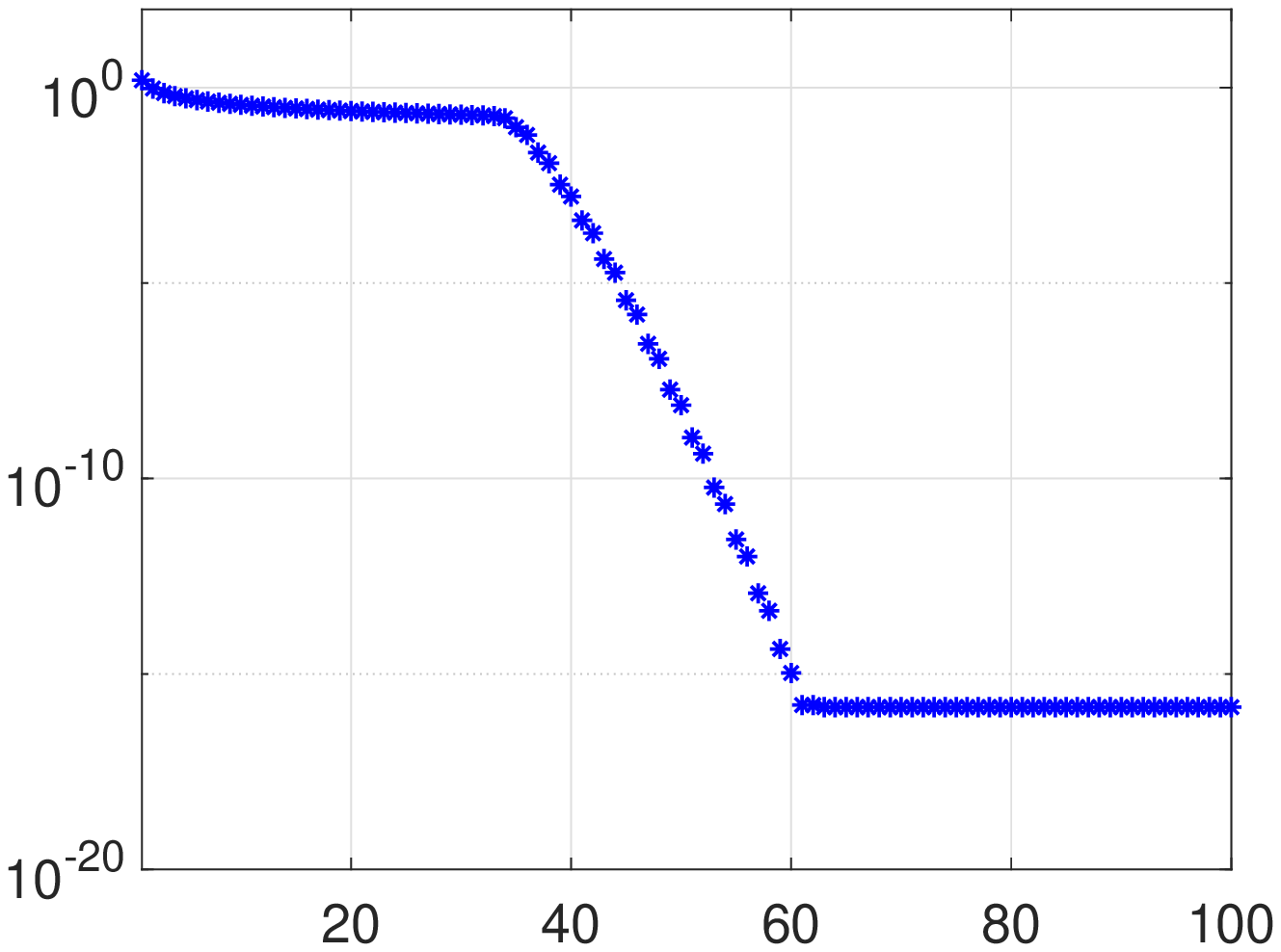,width=5cm}}
\quad
{\psfig{file=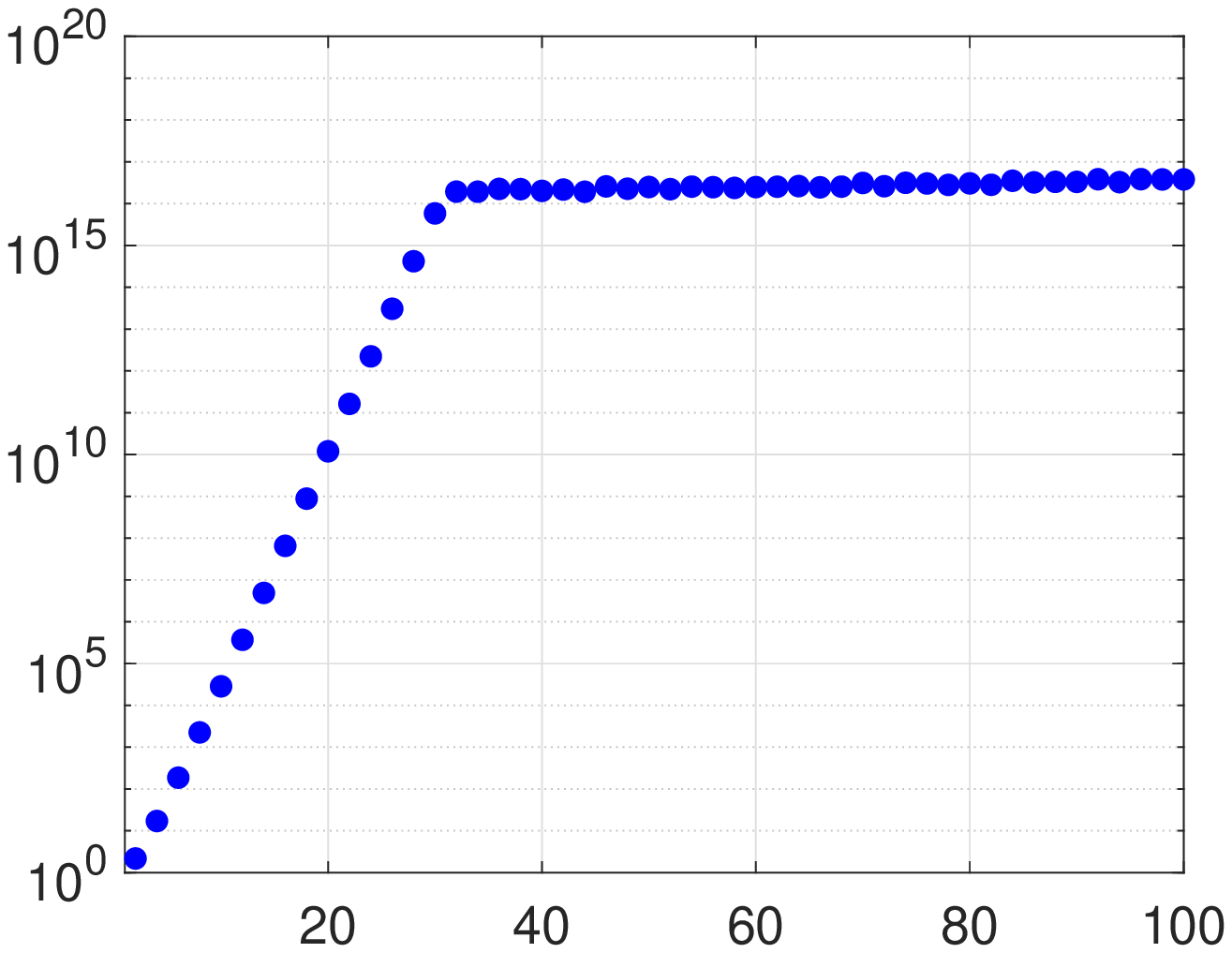,width=5cm}}
\vspace*{8pt}
\caption{The singular value profile (left) and condition number (right) of matrix $A$ versus $n$. Here, we take $\gamma=2$, $\eta=4$ and $\alpha=1/3$, $\beta=1/2$. \label{one}}
\end{figure}

In \cite{FCM2014,SIAMRev2019}, the authors proposed a truncated SVD solver with a tolerance $\varepsilon>0$ to regularize such ill-conditioned problems. In practice the user-controlled parameter $\varepsilon$ can be chosen close to machine epsilon. Despite many of the singular values which below $\varepsilon$ having been discarded, the corresponding regularized frame approximation operator can still approximate $f$ to high accuracy. Therefore, it is desired to solve \eqref{coefficients} via the truncated SVD. Let $A = U\Sigma V ^\ast$ be the SVD of matrix $A$, we define $\Sigma^\varepsilon$ as the $\varepsilon $-regularized version of $\Sigma$ and let $\Sigma^{\varepsilon,\dagger}$ be its pseudoinverse, i.e.,
\begin{align}\nonumber
(\Sigma^\varepsilon)_{ii} =
\left\{
\begin{aligned}
& \sigma_i, \quad \sigma_i > \varepsilon,   \\
& 0, \quad \sigma_i \leq \varepsilon,
\end{aligned}
\right.
\end{align}
Then we define the regularized approximation of \eqref{coefficients} as
\begin{align}
\textbf{a}^\varepsilon = V \Sigma^{\varepsilon,\dagger} U^\ast \textbf{b},
\end{align}
and the corresponding regularized Jacobi frame approximation to $f$ as
\begin{align}
\mathcal{J}^{\varepsilon,\gamma}_{m,n,\alpha,\beta}(f) :=
\sum^n_{i=0} a^\varepsilon_i \varphi^{(\alpha,\beta)}_i (x).
\end{align}
We denote the overall Jacobi frame approximation procedure as the mapping
\begin{align}\label{jacobiframeoparator}
\mathcal{Q}^{\varepsilon,\gamma}_{m,n,\alpha,\beta}: C([-1,1]) \rightarrow C([-1,1]), \quad
f\rightarrow  \mathcal{J}^{\varepsilon,\gamma}_{m,n,\alpha,\beta}(f).
\end{align}
In particular, the operator $\mathcal{Q}^{\varepsilon,\gamma}_{m,n,\alpha,\beta}$ is a standard least-squares approximation when $\varepsilon=0$ and $\gamma=1$.

\subsection{Reformulation in terms of singular vector}
In fact, another representation of the operator $\mathcal{Q}^{\varepsilon,\gamma}_{m,n,\alpha,\beta}$ can be given by means of the left or right singular vectors of the matrix $A$. Without loss of generality, here we choose the right singular vector $v_0,...,v_n \in \mathbb{C}_n$ of matrix A. Then, we can define a polynomial in $\mathbb{P}_n$ to each singular vector
\begin{align}
\zeta_i = \sum^n_{j=0} (v_i)_j  \varphi^{(\alpha,\beta)}_j(x) \in \mathbb{P}_n, \quad i= 0,...,n.
\end{align}
From the definition of $\varphi^{(\alpha,\beta)}_i (x)$, we know that $\{\varphi^{(\alpha,\beta)}_i (x)\}^\infty_{i=0}$ are the orthonormal Jacobi polynomials with weight function
$$\omega^{(\alpha,\beta)}_\gamma(x) = \omega^{(\alpha,\beta)}(x/\gamma)$$ on interval $[-\gamma,\gamma]$.
And since the $v_i$ are orthonormal vectors, the functions $\zeta_i$ are orthonormal on $[-\gamma, \gamma]$ with weight function $w_\gamma^{(\alpha,\beta)}(x)$, i.e.,
\begin{equation}\nonumber
\langle \zeta_i, \zeta_j\rangle_{[-\gamma,\gamma],w_\gamma^{(\alpha,\beta)}}
 = \delta_{ij}, \quad i,j = 0,...,n.
\end{equation}
While the functions $\zeta_i$ are also orthogonal with respect to the discrete inner product
$\langle \cdot,\cdot\rangle_{m,2}$, i.e.,
\begin{equation}\nonumber
\langle \zeta_i,\zeta_j \rangle_{m,2} = v'_j A' A v_i = \sigma^2_j \delta_{ji}, \quad i,j = 0,...,n.
\end{equation}

With this in hand, we define the subspace
\begin{equation}
\mathbb{P}^{\varepsilon,\gamma}_{m,n} = span \{\zeta_i: \sigma_i>\varepsilon \} \subseteq \mathbb{P}_n.
\end{equation}
On the one hand, this space coincides with $\mathbb{P}_n$ whenever $m\geq n$ and
$\sigma_{\min} = \sigma_n > \varepsilon$, and $\mathbb{P}^{0,\gamma}_{m,n} = \mathbb{P}_n$ for $m\geq n$ especially. On the other hand, we get
$\mathbb{P}^{\varepsilon,\gamma}_{m,n} \subseteq P^{\varepsilon,\gamma}_{m,n}$ when $\varepsilon>0$,
where
\begin{equation}
P^{\varepsilon,\gamma}_{m,n} = \{  p\in \mathbb{P}_n: \|p\|_{[-\gamma,\gamma],w_\gamma^{(\alpha,\beta)}}
\leq \|p\|_{m,2} /\varepsilon  \}\subseteq \mathbb{P}_n.
\end{equation}
Then the Jacobi frame approximation $\mathcal{Q}^{\varepsilon,\gamma}_{m,n,\alpha,\beta}$ of $f\in C([-1,1])$ belongs to the space $\mathbb{P}^{\varepsilon,\gamma}_{m,n}$. In fact, it is the orthogonal projection onto this space with respect to the discrete inner product $\langle\cdot,\cdot\rangle_{m,2}$. Therefore, by orthogonality, we write
\begin{equation}\label{Reformulation}
\mathcal{Q}^{\varepsilon,\gamma}_{m,n,\alpha,\beta}(f) = \sum_{\sigma_i>\varepsilon}
\frac{\langle f,\zeta_i \rangle_{m,2}}{\sigma^2_i} \zeta_i, \quad f\in C([-1,1]).
\end{equation}
The operator $\mathcal{Q}^{\varepsilon,\gamma}_{m,n,\alpha,\beta}$ is linear and its range is space $\mathbb{P}^{\varepsilon,\gamma}_{m,n}$.

\subsection{Two useful inequalities}
Consider the previous statements, here we give two inequalities between the uniform norm and the $L^2$-$w_\gamma^{(\alpha,\beta)}$ norm on the interval $[-\gamma,\gamma]$, which would play important roles on the error estimation in Sec. 3.2.

\begin{lemma}
Let $p \in \mathbb{P}_n$. Then
\begin{equation}\label{normconstant}
\|p\|_{[-\gamma,\gamma],\infty} \leq  c_{n,\gamma,\alpha,\beta} \|p\|_{[-\gamma,\gamma],w_\gamma^{(\alpha,\beta)}},
\end{equation}
where the constant $c_{n,\gamma,\alpha,\beta}$ has the following asymptotic property
as $n\rightarrow\infty $
\begin{align}\label{imimimconstant}
c_{n,\gamma,\alpha,\beta}
= \max_{x\in [-\gamma,\gamma]} \left(\sum^n_{i=0} |\varphi^{(\alpha,\beta)}_i (x)|^2 \right)^{1/2}
=
\left\{
\begin{aligned}
& \mathcal{O}(n^{\mu+1}/\sqrt{\gamma}), \quad\mu > -1/2,   \\
& \mathcal{O}(n^{1/2}/\sqrt{\gamma}),  \quad\mu \leq -1/2.
\end{aligned}
\right.
\end{align}
\end{lemma}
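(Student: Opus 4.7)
The plan is to split the lemma into two independent pieces. First I would establish the sharp constant identity via a standard Christoffel--Darboux style argument, and then separately derive the asymptotic growth rates from the known Szeg\H{o} bounds in \eqref{maximumJacobi} combined with the size of the normalizing constants $h_i^{(\alpha,\beta)}$.

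For the inequality $\|p\|_{[-\gamma,\gamma],\infty}\le c_{n,\gamma,\alpha,\beta}\|p\|_{[-\gamma,\gamma],w_\gamma^{(\alpha,\beta)}}$, since $\{\varphi^{(\alpha,\beta)}_i\}_{i=0}^\infty$ is orthonormal on $[-\gamma,\gamma]$ with weight $w_\gamma^{(\alpha,\beta)}$, any $p\in\mathbb{P}_n$ admits the expansion $p=\sum_{i=0}^n a_i\varphi^{(\alpha,\beta)}_i$ with $a_i=\langle p,\varphi^{(\alpha,\beta)}_i\rangle_{[-\gamma,\gamma],w_\gamma^{(\alpha,\beta)}}$ and $\sum_{i=0}^n|a_i|^2=\|p\|_{[-\gamma,\gamma],w_\gamma^{(\alpha,\beta)}}^2$. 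A pointwise Cauchy--Schwarz in the summation index then gives $|p(x)|^2\le\bigl(\sum_{i=0}^n|\varphi^{(\alpha,\beta)}_i(x)|^2\bigr)\|p\|_{[-\gamma,\gamma],w_\gamma^{(\alpha,\beta)}}^2$, and taking the supremum over $x\in[-\gamma,\gamma]$ yields exactly the claimed inequality with the exact constant $c_{n,\gamma,\alpha,\beta}=\max_{x\in[-\gamma,\gamma]}\bigl(\sum_{i=0}^n|\varphi^{(\alpha,\beta)}_i(x)|^2\bigr)^{1/2}$.

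For the asymptotic estimate I would use the substitution $y=x/\gamma\in[-1,1]$. By the definition \eqref{jacobiframe},
\begin{equation*}
c_{n,\gamma,\alpha,\beta}^2 = \frac{1}{\gamma}\max_{y\in[-1,1]}\sum_{i=0}^n \frac{|P_i^{(\alpha,\beta)}(y)|^2}{h_i^{(\alpha,\beta)}} \le \frac{1}{\gamma}\sum_{i=0}^n \frac{\|P_i^{(\alpha,\beta)}\|_{[-1,1],\infty}^2}{h_i^{(\alpha,\beta)}}.
\end{equation*}
Using \eqref{asymorthoconst} with $h_i^{(\alpha,\beta)}\sim i^{-1}$ and the Szeg\H{o} bound \eqref{maximumJacobi}, I split into cases: when $\mu\ge-1/2$ the $i$-th summand is $O(i^{2\mu+1})$, so the partial sum is $O(n^{2\mu+2})$; when $\mu<-1/2$ the $i$-th summand is $O(1)$, giving $O(n)$. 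Taking square roots and restoring the factor $1/\sqrt{\gamma}$ produces the two rates $\mathcal{O}(n^{\mu+1}/\sqrt{\gamma})$ and $\mathcal{O}(n^{1/2}/\sqrt{\gamma})$ claimed in \eqref{imimimconstant}.

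The only subtle point, and what I expect to be the main obstacle, is that the Cauchy--Schwarz step trades pointwise extrema across different $i$ for the simultaneous maximum, so one has to argue this trade is lossless at the asymptotic level. Here it is: the uniform bounds in \eqref{maximumJacobi} are attained near the endpoints $y=\pm1$ for all $i$ simultaneously (since $P_i^{(\alpha,\beta)}(1)=\binom{i+\alpha}{i}\sim i^\alpha$ and similarly at $-1$), so the upper bound by the sum of individual maxima is sharp in order, and no improvement in $n$ is lost. The case $\mu=-1/2$ is consistent between the two branches since $n^{\mu+1}=n^{1/2}$, so no separate treatment is required.
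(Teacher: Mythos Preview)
Your argument is correct and follows essentially the same route as the paper: expand $p$ in the orthonormal system $\{\varphi_i^{(\alpha,\beta)}\}$, apply Cauchy--Schwarz pointwise to obtain the inequality with the Christoffel-type constant, then control the sum termwise via \eqref{maximumJacobi} and \eqref{asymorthoconst}. The paper is in fact slightly looser than you at the point where the maximum over $x$ is pulled inside the sum---it writes that step as an equality rather than an inequality---so your closing paragraph on endpoint sharpness is a welcome clarification, although for the stated $\mathcal{O}$-bound only the upper estimate is actually required.
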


\begin{proof}
We write $ p\in \mathbb{P}_n$ as $p =\sum^n_{i=0} c_i \varphi^{(\alpha,\beta)}_i (x) $. Using the Cauchy-Schwartz inequality and recalling \eqref{jacobiframe}, we have
\begin{equation}\nonumber
\|p\|_{[-\gamma,\gamma],\infty}
 \leq \left(\sum^n_{i=0} |c_i|^2\right)^{1/2}
       \left(\sum^n_{i=0} |\varphi^{(\alpha,\beta)}_i (x)|^2 \right)^{1/2} \\
\leq \|p\|_{[-\gamma,\gamma],w_\gamma^{(\alpha,\beta)}} c_{n,\gamma,\alpha,\beta},
\end{equation}
where
\begin{equation}\nonumber
c_{n,\gamma,\alpha,\beta}
= \frac{1}{\sqrt{\gamma}}\left(\sum^n_{i=0} \frac{1}{h_i^{(\alpha,\beta)}}
   \left(\max_{x\in [-\gamma,\gamma]} |P_i^{(\alpha,\beta)}(x/\gamma)|\right)^2\right)^{1/2}.
\end{equation}
We can directly show the asymptotic property of $c_{n,\gamma,\alpha,\beta}$ directly by recalling \eqref{maximumJacobi} and \eqref{asymorthoconst} for the classical Jacobi polynomial
$P^{(\alpha,\beta)}_i (x)$.
\end{proof}

\begin{lemma}
Let $p \in \mathbb{P}_n$. Then
\begin{equation}\label{inverseinequa}
\|p\|_{[-\gamma,\gamma],w_\gamma^{(\alpha,\beta)}}
\leq \sqrt{\gamma h_0^{(\alpha,\beta)} } \|p\|_{[-\gamma,\gamma],\infty}.
\end{equation}
\end{lemma}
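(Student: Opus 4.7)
The plan is to bound the weighted $L^2$-norm by pulling the supremum out of the integral and then to identify the remaining mass $\int_{-\gamma}^\gamma w_\gamma^{(\alpha,\beta)}(x)\,\mathrm{d}x$ in terms of $h_0^{(\alpha,\beta)}$.

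First, since $w_\gamma^{(\alpha,\beta)}(x) = (1-x/\gamma)^{\alpha}(1+x/\gamma)^{\beta}$ is nonnegative on $[-\gamma,\gamma]$ for $\alpha,\beta > -1$, I would start from the definition
\begin{equation*}
\|p\|^2_{[-\gamma,\gamma],w_\gamma^{(\alpha,\beta)}}
= \int_{-\gamma}^{\gamma} |p(x)|^2\, w_\gamma^{(\alpha,\beta)}(x)\,\mathrm{d}x
\le \|p\|^2_{[-\gamma,\gamma],\infty}\int_{-\gamma}^{\gamma} w_\gamma^{(\alpha,\beta)}(x)\,\mathrm{d}x.
\end{equation*}
Notice that this step uses nothing about $p$ being a polynomial; it is simply the trivial sup-bound.

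Next, I would evaluate the remaining integral by changing variables $u=x/\gamma$, which yields
\begin{equation*}
\int_{-\gamma}^{\gamma} w_\gamma^{(\alpha,\beta)}(x)\,\mathrm{d}x
= \gamma \int_{-1}^{1} w^{(\alpha,\beta)}(u)\,\mathrm{d}u.
\end{equation*}
Then I would identify this last integral with $h_0^{(\alpha,\beta)}$. Indeed, the normalization convention $P_0^{(\alpha,\beta)}(1) = \binom{\alpha}{0} = 1$ forces $P_0^{(\alpha,\beta)} \equiv 1$, so the $i=j=0$ case of the orthogonality relation \eqref{jacobiortho} gives $\int_{-1}^{1} w^{(\alpha,\beta)}(u)\,\mathrm{d}u = h_0^{(\alpha,\beta)}$ directly (and this is consistent with \eqref{asymorthoconst}).

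Combining the two displays yields $\|p\|^2_{[-\gamma,\gamma],w_\gamma^{(\alpha,\beta)}} \le \gamma\, h_0^{(\alpha,\beta)}\, \|p\|^2_{[-\gamma,\gamma],\infty}$, and taking square roots delivers \eqref{inverseinequa}. There is no real obstacle here: the only thing to watch is confirming that $P_0^{(\alpha,\beta)} \equiv 1$ under the normalization used in the paper, so that the weight mass equals exactly $h_0^{(\alpha,\beta)}$ rather than some other constant.
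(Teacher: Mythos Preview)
Your argument is correct and matches the paper's proof essentially line for line: bound $|p(x)|^2$ by $\|p\|_{[-\gamma,\gamma],\infty}^2$ inside the integral, then identify $\int_{-\gamma}^{\gamma} w_\gamma^{(\alpha,\beta)}(x)\,\mathrm{d}x = \gamma h_0^{(\alpha,\beta)}$. The paper is even slightly terser, omitting the explicit change of variables and the check that $P_0^{(\alpha,\beta)}\equiv 1$.
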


\begin{proof}
According to the definition of $L^2$ norm, we have
\begin{equation}\nonumber
\|p\|^2_{[-\gamma,\gamma],w_\gamma^{(\alpha,\beta)}}
\leq \|p\|^2_{[-\gamma,\gamma],\infty} \int^\gamma_{-\gamma}w_\gamma^{(\alpha,\beta)}(x)dx
 = \gamma h_0^{(\alpha,\beta)} \|p\|^2_{[-\gamma,\gamma],\infty},
\end{equation}
where $h_0^{(\alpha,\beta)}$ defined in \eqref{asymorthoconst} as
$$h_0^{(\alpha,\beta)} = \frac{2^{\alpha+\beta+1} \Gamma(\alpha+1)
\Gamma(\beta+1)}{(\alpha+\beta+1) \Gamma(\alpha+\beta+1)}.$$
\end{proof}

\section{Accuracy and Conditioning of Jacobi Frame Approximation}

\subsection{A rough error bound}
According to the definition of norms defined in Sec. 2 and \eqref{conditionnumber}, combined with norm inequality \eqref{norminequality} and the triangle inequality, we deduce the following results. Further, this theorem also holds for Chebyshev and Gegenbauer polynomial frame approximation. Since the specific idea is completely consistent with Lemma 3.1 in \cite{BENADCOCK}, we omit the derivation process.
\begin{theorem}
Let $\varepsilon>0$, $\gamma>1$ and $\mathcal{Q}^{\varepsilon,\gamma}_{m,n,\alpha,\beta}$ be the Jacobi frame approximation operator defined in \eqref{jacobiframeoparator}. Then for any $f\in C([-1,1])$
\begin{equation}\label{firsterror}
\begin{split}
& \quad \|f-\mathcal{Q}^{\varepsilon,\gamma}_{m,n,\alpha,\beta}(f) \|_{[-1,1],\infty} \\
&\leq (1+\sqrt{m+1}C_1) \|f-p\|_{[-1,1],\infty} + C_2 \varepsilon \|p\|_{[-\gamma,\gamma],\infty},
\quad \forall p \in \mathbb{P}_n,
\end{split}
\end{equation}
where
\begin{align}\label{firstconstant}
C_1 := C_1(m,n,\alpha,\beta,\gamma,\varepsilon) = \sup\{ \|p\|_{[-1,1],\infty}:
p \in \mathbb{P}^{\varepsilon,\gamma}_{m,n},  \|p\|_{m,\infty}\leq1\},
\end{align}
\begin{align}\label{secondconstant}
C_2 := C_2(m,n,\alpha,\beta,\gamma,\varepsilon)
= \{ \|p-\mathcal{Q}^{\varepsilon,\gamma}_{m,n,\alpha,\beta}(p)\|_{[-1,1],\infty}: p \in \mathbb{P}_n,
\|p\|_{[-\gamma,\gamma],\infty} \leq \varepsilon^{-1} \}.
\end{align}
Moreover, the condition number $\kappa(\mathcal{Q}^{\varepsilon,\gamma}_{m,n,\alpha,\beta})$ satisfies \begin{align}\label{firstcond}
C_1\leq \kappa(\mathcal{Q}^{\varepsilon,\gamma}_{m,n,\alpha,\beta}) \leq \sqrt{m+1}C_1.
\end{align}
\end{theorem}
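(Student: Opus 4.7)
The plan is to follow the standard framework used by Adcock and Shadrin for regularized least-squares frame operators. Two structural ingredients will do most of the work. First, by the reformulation \eqref{Reformulation}, the map $\mathcal{Q}:=\mathcal{Q}^{\varepsilon,\gamma}_{m,n,\alpha,\beta}$ is the $\langle\cdot,\cdot\rangle_{m,2}$-orthogonal projection onto $\mathbb{P}^{\varepsilon,\gamma}_{m,n}$, so it is linear, contractive in $\|\cdot\|_{m,2}$, and idempotent on its range. Second, the norm inequality \eqref{norminequality} lets me pass between $\|\cdot\|_{m,\infty}$, $\|\cdot\|_{m,2}$ and $\|\cdot\|_{[-1,1],\infty}$ at the cost of a factor $\sqrt{m+1}$.

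For the error bound, the main trick is a three-term split using linearity of $\mathcal{Q}$: for any $p\in\mathbb{P}_n$ write
\[
f-\mathcal{Q}(f) = (f-p) - \mathcal{Q}(f-p) + \bigl(p-\mathcal{Q}(p)\bigr).
\]
The first term contributes $\|f-p\|_{[-1,1],\infty}$ outright. The second term is the one that forces the $\sqrt{m+1}\,C_1$ constant: since $\mathcal{Q}(f-p)\in\mathbb{P}^{\varepsilon,\gamma}_{m,n}$, I apply \eqref{firstconstant} to get $\|\mathcal{Q}(f-p)\|_{[-1,1],\infty}\le C_1\|\mathcal{Q}(f-p)\|_{m,\infty}$ and then chain the two sides of \eqref{norminequality} around the $L^2$-contraction $\|\mathcal{Q}(f-p)\|_{m,2}\le\|f-p\|_{m,2}$. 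The third term is handled purely by homogeneity: dividing $p$ by $\varepsilon\|p\|_{[-\gamma,\gamma],\infty}$ brings it into the admissible set of \eqref{secondconstant}, which gives the $C_2\varepsilon\|p\|_{[-\gamma,\gamma],\infty}$ contribution. A single triangle inequality then assembles \eqref{firsterror}.

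For the condition number, linearity of $\mathcal{Q}$ collapses \eqref{conditionnumber} to $\kappa(\mathcal{Q})=\sup_{q\ne 0}\|\mathcal{Q}(q)\|_{[-1,1],\infty}/\|q\|_{m,\infty}$; the upper bound $\sqrt{m+1}\,C_1$ then duplicates the second-term estimate above with $q$ in place of $f-p$. For the lower bound I would pick $q$ to be an extremizer of the supremum defining $C_1$ in \eqref{firstconstant}, which by construction lies in the range $\mathbb{P}^{\varepsilon,\gamma}_{m,n}$; idempotency gives $\mathcal{Q}(q)=q$, so the ratio reaches $C_1$. Existence of such an extremizer follows from compactness in the finite-dimensional space $\mathbb{P}^{\varepsilon,\gamma}_{m,n}$, together with the observation that, under the standing assumption $m=\eta n\ge n$, $\|\cdot\|_{m,\infty}$ is a genuine norm on $\mathbb{P}_n$.

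I do not expect a serious obstacle---this is the author's stated reason for omitting the proof---and the only place requiring real care is orchestrating \eqref{norminequality} correctly around the projection: the contracting side $\|\cdot\|_{m,2}\le\sqrt{2}\|\cdot\|_{[-1,1],\infty}$ must be applied to the \emph{input} $f-p$ and the amplifying side $\|\cdot\|_{m,\infty}\le\sqrt{(m+1)/2}\,\|\cdot\|_{m,2}$ to the \emph{output} $\mathcal{Q}(f-p)$, since any other pairing either wastes the $L^2$-contraction or forces an application of $C_1$ to a polynomial outside $\mathbb{P}^{\varepsilon,\gamma}_{m,n}$.
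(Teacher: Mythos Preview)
Your proposal is correct and is precisely the Adcock--Shadrin argument the paper explicitly invokes when it omits the derivation: the three-term split $f-\mathcal{Q}(f)=(f-p)-\mathcal{Q}(f-p)+(p-\mathcal{Q}(p))$, the $\|\cdot\|_{m,2}$-contraction of the orthogonal projection combined with \eqref{norminequality} to produce the $\sqrt{m+1}\,C_1$ factor, the homogeneity rescaling for the $C_2$ term, and the idempotency on $\mathbb{P}^{\varepsilon,\gamma}_{m,n}$ for the lower condition-number bound are exactly the ingredients of Lemma~3.1 in \cite{BENADCOCK}. Your cautionary remark about which side of \eqref{norminequality} to apply to input versus output is the only nontrivial bookkeeping, and you have it right.
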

The next work is to prove that the constants $C_1,C_2$ are bounded under some assumptions.

\subsection{Bounding the constants $C_1$ and $C_2$}
\begin{theorem}
The constants $C_1$ and $C_2$ defined in \eqref{firstconstant}, \eqref{secondconstant} satisfy
\begin{align}\label{firstbound}
C_1 \leq C\left(m,n,\gamma,\varepsilon/ (\sqrt{2} c_{n,\gamma,\alpha,\beta} )\right),
\end{align}
\begin{align}\label{secondbound}
C_2 \leq \sqrt{\gamma(m+1)h_0^{(\alpha,\beta)}/2} \cdot
C\left(m,n,\gamma,\sqrt{m+1}\varepsilon / (\sqrt{2}c_{n,\gamma,\alpha,\beta})\right),
\end{align}
where
\begin{align}\label{thirdbound}
C(m,n,\gamma,\varepsilon) := \sup\{\|p\|_{[-1,1],\infty}: p \in \mathbb{P}_n,   \|p\|_{m,\infty}\leq1, \|p\|_{[-\gamma,\gamma],\infty} \leq \varepsilon^{-1}\}.
\end{align}
\end{theorem}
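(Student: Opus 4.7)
The plan is to reduce both bounds to the extremal problem \eqref{thirdbound} defining $C(m,n,\gamma,\cdot)$ by verifying, after an appropriate normalisation, the two constraints $\|p\|_{m,\infty}\leq 1$ and $\|p\|_{[-\gamma,\gamma],\infty}\leq \varepsilon'^{-1}$ for a suitably chosen $\varepsilon'$. The only tools I will need are Lemma 2.1, Lemma 2.2, the discrete norm equivalence \eqref{norminequality}, and the orthogonal-projection structure underlying \eqref{Reformulation}.

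For the bound \eqref{firstbound} on $C_1$, I would take any $p\in \mathbb{P}^{\varepsilon,\gamma}_{m,n}$ with $\|p\|_{m,\infty}\leq 1$. Since $\mathbb{P}^{\varepsilon,\gamma}_{m,n}\subseteq P^{\varepsilon,\gamma}_{m,n}$, one has $\|p\|_{[-\gamma,\gamma],w_\gamma^{(\alpha,\beta)}}\leq \|p\|_{m,2}/\varepsilon \leq \sqrt{2}/\varepsilon$ by \eqref{norminequality}; Lemma 2.1 then upgrades this to $\|p\|_{[-\gamma,\gamma],\infty}\leq \sqrt{2}\,c_{n,\gamma,\alpha,\beta}/\varepsilon$. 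Thus $p$ is admissible in the definition of $C\!\left(m,n,\gamma,\varepsilon/(\sqrt{2}c_{n,\gamma,\alpha,\beta})\right)$ without further rescaling, and \eqref{firstbound} follows.

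For $C_2$ the key step is to exploit \eqref{Reformulation}. Expanding $p\in\mathbb{P}_n$ as $p=\sum_{i=0}^{n} c_i\zeta_i$ and using the dual orthogonality relations $\langle\zeta_i,\zeta_j\rangle_{[-\gamma,\gamma],w_\gamma^{(\alpha,\beta)}}=\delta_{ij}$ and $\langle\zeta_i,\zeta_j\rangle_{m,2}=\sigma_i^2\delta_{ij}$, one checks that $\mathcal{Q}^{\varepsilon,\gamma}_{m,n,\alpha,\beta}(p)=\sum_{\sigma_i>\varepsilon}c_i\zeta_i$, so that $q:=p-\mathcal{Q}^{\varepsilon,\gamma}_{m,n,\alpha,\beta}(p)=\sum_{\sigma_i\leq\varepsilon}c_i\zeta_i$. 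Two Parseval identities then give
\[
\|q\|^2_{[-\gamma,\gamma],w_\gamma^{(\alpha,\beta)}}=\sum_{\sigma_i\leq\varepsilon}|c_i|^2\leq \|p\|^2_{[-\gamma,\gamma],w_\gamma^{(\alpha,\beta)}},\qquad \|q\|^2_{m,2}=\sum_{\sigma_i\leq\varepsilon}\sigma_i^2|c_i|^2\leq \varepsilon^2\|q\|^2_{[-\gamma,\gamma],w_\gamma^{(\alpha,\beta)}}.
\]
Starting from the hypothesis $\|p\|_{[-\gamma,\gamma],\infty}\leq 1/\varepsilon$, Lemma 2.2 controls $\|p\|_{[-\gamma,\gamma],w_\gamma^{(\alpha,\beta)}}\leq \sqrt{\gamma h_0^{(\alpha,\beta)}}/\varepsilon$, and chaining the two displayed estimates with Lemma 2.1 and \eqref{norminequality} yields exactly
\[
\|q\|_{[-\gamma,\gamma],\infty}\leq c_{n,\gamma,\alpha,\beta}\sqrt{\gamma h_0^{(\alpha,\beta)}}/\varepsilon,\qquad \|q\|_{m,\infty}\leq \sqrt{\gamma(m+1)h_0^{(\alpha,\beta)}/2}.
\]
Dividing $q$ by the second of these and applying \eqref{thirdbound} then produces precisely \eqref{secondbound}, as the ratio of the two bounds reproduces the argument $\sqrt{m+1}\,\varepsilon/(\sqrt{2}c_{n,\gamma,\alpha,\beta})$.

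I do not anticipate any genuine obstacle; the subtlety is purely bookkeeping. Specifically, one must be careful to route the bound on $\|q\|_{m,2}$ through $\varepsilon\|q\|_{[-\gamma,\gamma],w_\gamma^{(\alpha,\beta)}}$ (exploiting $\sigma_i\leq\varepsilon$ for the indices in $q$) rather than through the trivial contraction $\|q\|_{m,2}\leq\|p\|_{m,2}$; otherwise the $\sqrt{\gamma h_0^{(\alpha,\beta)}}$ factor in the prefactor of \eqref{secondbound} and the exact argument of $C(\cdot)$ in \eqref{secondbound} will not line up with the stated claim.
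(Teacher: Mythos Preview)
Your proposal is correct and follows essentially the same route as the paper: for $C_1$ you use the inclusion $\mathbb{P}^{\varepsilon,\gamma}_{m,n}\subseteq P^{\varepsilon,\gamma}_{m,n}$ together with \eqref{norminequality} and Lemma~2.1, and for $C_2$ you expand $p$ in the $\zeta_i$, identify $q$ with the tail $\sum_{\sigma_i\le\varepsilon}c_i\zeta_i$, apply the two Parseval identities, and then combine Lemmas~2.1--2.2 with \eqref{norminequality} before renormalising---exactly as in the paper (your coefficients $c_i$ are the paper's $\langle p,\zeta_i\rangle_{m,2}/\sigma_i^2$). The only cosmetic difference is that you bound $\|q\|_{m,2}^2$ first by $\varepsilon^2\|q\|_{[-\gamma,\gamma],w_\gamma^{(\alpha,\beta)}}^2$ and then by $\varepsilon^2\|p\|_{[-\gamma,\gamma],w_\gamma^{(\alpha,\beta)}}^2$, whereas the paper passes to $p$ in one step; the outcome is identical.
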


\begin{proof}
We first consider constant $C_1$. Let $p\in\mathbb{P}^{\varepsilon,\gamma}_{m,n}$ with $\|p\|_{m,\infty}\leq1$. Then we can write $p = \sum_{\sigma_i>\varepsilon} c_i \zeta_i $ and, using the  the continuous and discrete orthogonality of the $\zeta_i$, we get
\begin{equation}\nonumber
\|p\|_{[-\gamma,\gamma],w_\gamma^{(\alpha,\beta)}}
\leq \|p\|_{m,2} / \varepsilon
\leq \sqrt{2} \|p\|_{m,\infty} / \varepsilon
\leq \sqrt{2}/ \varepsilon .
\end{equation}
Since $p\in\mathbb{P}^{\varepsilon,\gamma}_{m,n}\subseteq \mathbb{P}_n$, using Lemma 2.1, we have
\begin{equation}
\|p\|_{[-\gamma,\gamma],\infty}
\leq  c_{n,\gamma,\alpha,\beta} \|p\|_{[-\gamma,\gamma],w_\gamma^{(\alpha,\beta)}}
\leq  \sqrt{2} c_{n,\gamma,\alpha,\beta} / \varepsilon .
\end{equation}
We deduce that
\begin{equation}\nonumber
\begin{split}
C_1 &\leq \sup \left\{ \|p\|_{[-1,1],\infty}: p \in \mathbb{P}^{\varepsilon,\gamma}_{m,n},
       \|p\|_{m,\infty}\leq1, \|p\|_{[-\gamma,\gamma],\infty}
      \leq  \sqrt{2} c_{n,\gamma,\alpha,\beta}/ \varepsilon \right\} \\
&= C\left(m,n,\gamma,\varepsilon/ (\sqrt{2} c_{n,\gamma,\alpha,\beta} )\right).
\end{split}
\end{equation}

We then consider constant $C_2$. Let $p\in\mathbb{P}^{\varepsilon,\gamma}_{m,n}$ and $\|p\|_{[-\gamma,\gamma],\infty}\leq \varepsilon^{-1}$. Since $p \in \mathbb{P}_n$ and \eqref{Reformulation}, we may write
$$ p= \sum^n_{i=0}
\frac{\langle p,\zeta_i \rangle_{m,2}} {\sigma^2_i} \zeta_i, \quad
\mathcal{Q}^{\varepsilon,\gamma}_{m,n,\alpha,\beta}(p) = \sum_{\sigma_i>\varepsilon}
\frac{\langle p,\zeta_i \rangle_{m,2}}{\sigma^2_i} \zeta_i.$$
Using the continuous and discrete orthogonality of the $\zeta_i$ again, we get that
\begin{equation}\label{discreteortho}
\|p-\mathcal{Q}^{\varepsilon,\gamma}_{m,n,\alpha,\beta}(p)\|^2_{[-\gamma,\gamma],w_\gamma^{(\alpha,\beta)}}
 \leq \sum^n_{i=0} |\langle p,\zeta_i \rangle_{m,2}|^2 / \sigma^4_i
= \|p\|^2_{[-\gamma,\gamma],w_\gamma^{(\alpha,\beta)}}.
\end{equation}
\begin{equation}\label{zeropart}
\|p-\mathcal{Q}^{\varepsilon,\gamma}_{m,n,\alpha,\beta}(p)\|^2_{m,2}
\leq \varepsilon^2 \sum_{\sigma_i\leq\varepsilon} |\langle p,\zeta_i \rangle_{m,2}|^2 / \sigma^4_i
\leq  \varepsilon^2 \|p\|^2_{[-\gamma,\gamma],w_\gamma^{(\alpha,\beta)}}.
\end{equation}

Now observe that we can write
$$C_2= \max\{ \|q\|_{[-1,1],\infty}: q\in \mathcal{A}\},$$
$$ \mathcal{A} = \{q: q=p-\mathcal{Q}^{\varepsilon,\gamma}_{m,n,\alpha,\beta}(p),
p\in \mathbb{P}_n,  \|p\|_{[-\gamma,\gamma],\infty}\leq \varepsilon^{-1}\}.$$
Let  $q=p-\mathcal{Q}^{\varepsilon,\gamma}_{m,n,\alpha,\beta}(p) \in \mathcal{A}$. Then $q\in \mathbb{P}_n$ and due to \eqref{discreteortho}, \eqref{normconstant} and \eqref{inverseinequa},
\begin{equation}\label{firstpart}
\begin{split}
\|q\|_{[-\gamma,\gamma],\infty}
& \leq c_{n,\gamma,\alpha,\beta} \|q\|_{[-\gamma,\gamma],w_\gamma^{(\alpha,\beta)}}
\leq c_{n,\gamma,\alpha,\beta} \|p\|_{[-\gamma,\gamma],w_\gamma^{(\alpha,\beta)}}  \\
& \leq c_{n,\gamma,\alpha,\beta}  \sqrt{\gamma h_0^{(\alpha,\beta)}}  \|p\|_{[-\gamma,\gamma],\infty}
\leq c_{n,\gamma,\alpha,\beta}\sqrt{\gamma h_0^{(\alpha,\beta)}}  \varepsilon^{-1}.  \\
\end{split}
\end{equation}
By \eqref{norminequality} , \eqref{zeropart} and \eqref{inverseinequa}, and the fact that
$\|p\|_{[-\gamma,\gamma],\infty}\leq 1/\varepsilon$, we have
\begin{equation}\label{secondpart}
\begin{split}
\|q\|_{m,\infty}
&\leq \sqrt{(m+1)/2} \|p-\mathcal{Q}^{\varepsilon,\gamma}_{m,n,\alpha,\beta}(p)\|_{m,2}
\leq  \sqrt{(m+1)/2}  \varepsilon \|p\|_{[-\gamma,\gamma],w_\gamma^{(\alpha,\beta)}} \\
& \leq \sqrt{(m+1)/2}  \varepsilon \sqrt{\gamma h_0^{(\alpha,\beta)}}
\|p\|_{[-\gamma,\gamma],\infty}
= \sqrt{(m+1)/2} \sqrt{\gamma h_0^{(\alpha,\beta)}}.
\end{split}
\end{equation}
Hence, $q \in \mathcal{B}$ where
\begin{equation}\nonumber
\mathcal{B}:= \left\{ q\in \mathbb{P}_n,\|q\|_{[-\gamma,\gamma],\infty}
    \leq c_{n,\gamma,\alpha,\beta} \sqrt{\gamma h_0^{(\alpha,\beta)}}  \varepsilon^{-1} ,
\|q\|_{m,\infty} \leq \sqrt{\gamma(m+1)h_0^{(\alpha,\beta)}/2} \right\},
\end{equation}
which implies that $C_2\leq \max\{ \|q\|_{[-1,1],\infty}: q\in \mathcal{B}\}$ and after renormalizing, we get
\begin{equation}\nonumber
\begin{split}
C_2
&\leq \sqrt{\gamma(m+1)h_0^{(\alpha,\beta)}/2} \\
& \times \max \left\{ \|p\|_{[-1,1],\infty}: p \in \mathbb{P}_n, \|p\|_{m,\infty}\leq 1,
\|p\|_{[-\gamma,\gamma],\infty}
\leq  \sqrt{2}c_{n,\gamma,\alpha,\beta} / (\sqrt{m+1} \varepsilon)
   \right\} \\
&= \sqrt{\gamma(m+1)h_0^{(\alpha,\beta)}/2} \cdot
C\left(m,n,\gamma,\sqrt{m+1}\varepsilon / (\sqrt{2}c_{n,\gamma,\alpha,\beta})\right).
\end{split}
\end{equation}
This completes the proof.
\end{proof}

We now can conclude that the constants $C_1$ and $C_2$ both are bounded with the following assumptions proved in \cite{BENADCOCK}.
\begin{theorem}(\cite{BENADCOCK})
Let $0\leq \varepsilon \leq 1/e$, $\gamma\geq1$ and $n\geq\sqrt{\gamma^2-1} \log(1/\varepsilon)$, and consider the quantity $C(m,n,\gamma,\varepsilon)$ defined in \eqref{thirdbound}. Suppose that
\begin{equation}\label{oversamping}
m \geq 36n  \log(\varepsilon^{-1}) / \sqrt{\gamma^2-1}.
\end{equation}
Then $C(m,n,\gamma,\varepsilon) \leq c$ for some numerical constant $c>0$.
\end{theorem}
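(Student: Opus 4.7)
The theorem states that any polynomial $p\in\mathbb{P}_n$ satisfying the discrete bound $\|p\|_{m,\infty}\le 1$ and the continuous growth bound $\|p\|_{[-\gamma,\gamma],\infty}\le\varepsilon^{-1}$ automatically has $\|p\|_{[-1,1],\infty}\le c$ for an absolute constant. The difficulty is that neither hypothesis alone suffices: equispaced sampling on $[-1,1]$ admits the Runge phenomenon, so a polynomial of degree $n$ can oscillate exponentially between the $x_k$, and a growth bound on the enlarged interval only constrains the total size, not the behaviour at $[-1,1]$ specifically. The plan is to exploit the interaction of the two hypotheses via the potential-theoretic geometry of the pair $([-1,1],[-\gamma,\gamma])$, which is precisely the reason the factor $\sqrt{\gamma^2-1}$ appears in both the lower bound on $n$ and the oversampling condition on $m$.

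The concrete strategy I would follow is to construct, for each $x\in[-1,1]$, an explicit reproducing representation
\begin{equation*}
p(x)=\sum_{k=0}^{m}w_k(x)\,p(x_k)+R(x),\qquad p\in\mathbb{P}_n,
\end{equation*}
where the $w_k(x)$ are weights coming from a stable discrete projection onto $\mathbb{P}_n$ using the equispaced nodes (morally the rows of the pseudoinverse associated with the scaled orthonormal Jacobi system on $[-\gamma,\gamma]$), and $R$ is an approximation residual controlled by $\|p\|_{[-\gamma,\gamma],\infty}$ through a Bernstein--Walsh / Green's function estimate. The hypothesis $n\ge\sqrt{\gamma^2-1}\log(\varepsilon^{-1})$ enters so that $R$ is geometrically small enough to absorb the $\varepsilon^{-1}$ amplitude, and the oversampling $m\ge 36\,n\log(\varepsilon^{-1})/\sqrt{\gamma^2-1}$ enters so that $\sum_k|w_k(x)|$ is bounded uniformly on $[-1,1]$. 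Combining the two contributions via the triangle inequality then yields $\|p\|_{[-1,1],\infty}=O(1)$.

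Executing this plan requires two quantitative ingredients. The first is an Ehlich--Zeller / Coppersmith--Rivlin type discrete Markov inequality, sharpened so as to yield the specific numerical constant $36$: one must bound the smallest singular value of the rectangular sampling matrix (in a scaled Jacobi basis) at equispaced nodes, with explicit dependence on the ratio $m/n$ and on $\gamma$. The second is a Bernstein--Walsh estimate showing that any $p\in\mathbb{P}_n$ bounded by $\varepsilon^{-1}$ on $[-\gamma,\gamma]$ and small at a dense enough subset of $[-1,1]$ must be small on all of $[-1,1]$, where ``dense enough'' is calibrated against the equilibrium measure of $[-\gamma,\gamma]$. The main obstacle is dovetailing these two estimates so that the stated numerical constant $36$ really suffices; this requires potential-theoretic computations comparing the equispaced node density on $[-1,1]$ with the arcsine equilibrium distribution of $[-\gamma,\gamma]$, the ratio of which is governed by the Green's function value $\log(\gamma+\sqrt{\gamma^2-1})\sim\sqrt{\gamma^2-1}$ for $\gamma$ close to $1$. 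Controlling both estimates simultaneously with matching constants, rather than merely asymptotically, is the heart of the analysis.
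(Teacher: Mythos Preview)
The paper does not prove this theorem. It is stated with an explicit citation to \cite{BENADCOCK} (Adcock and Shadrin), and the sentence introducing it says the bound was ``proved in \cite{BENADCOCK}''; no argument is reproduced in the present paper. So there is no proof here to compare your proposal against.

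On the proposal itself: the ingredients you name---Coppersmith--Rivlin/Ehlich--Zeller discrete Markov estimates, Bernstein--Walsh growth, and the role of $\log(\gamma+\sqrt{\gamma^2-1})\sim\sqrt{\gamma^2-1}$---are indeed the ones that drive the result in the cited reference. However, your concrete mechanism, a representation $p(x)=\sum_k w_k(x)\,p(x_k)+R(x)$ with weights taken from ``the pseudoinverse associated with the scaled orthonormal Jacobi system on $[-\gamma,\gamma]$'', is circular. If the projection is onto all of $\mathbb{P}_n$, then $R\equiv 0$ for $p\in\mathbb{P}_n$, and the weights are exactly the ill-conditioned objects whose exponential blow-up is the subject of Figure~\ref{one}; bounding $\sum_k|w_k(x)|$ via the smallest singular value therefore fails outright. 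If instead you truncate the small singular values to keep the weights bounded, then controlling the residual $R$ by $\varepsilon\,\|p\|_{[-\gamma,\gamma],\infty}$ is precisely what Theorem~3.2 establishes \emph{using} the present theorem as input, so you would be assuming the conclusion. The argument in \cite{BENADCOCK} avoids this loop by working pointwise rather than through a global projection: for $x$ lying between grid points one builds an auxiliary interpolatory polynomial at nearby nodes and bounds its growth by balancing the product of nodal distances against the Green's-function growth permitted by the hypothesis $\|p\|_{[-\gamma,\gamma],\infty}\le\varepsilon^{-1}$. The two constraints combine additively in that local estimate, which is where the specific oversampling constant emerges.
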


\subsection{Main results}
We now summarize Theorem 3.1-3.3 and prove the main result of this section.
\begin{theorem}
Let $\varepsilon>0$, $\gamma\geq1$, $c>1$, $\mu=\max\{\alpha,\beta\}$ and $m\geq n\geq1$ satisfies \eqref{oversamping} be such that
\begin{align}\label{boundconstant}
C(m,n,\gamma,\varepsilon) \leq c.
\end{align}
Then the Jacobi frame approxiamtion $\mathcal{Q}^{\varepsilon',\gamma}_{m,n,\alpha,\beta}$ with
$ \varepsilon'= \sqrt{2} c_{n,\gamma,\alpha,\beta}  \varepsilon$
satisfies
\begin{align}
\kappa(\mathcal{Q}^{\varepsilon',\gamma}_{m,n,\alpha,\beta}) \leq c \sqrt{m+1}.
\end{align}
And for any $f\in C([-1,1])$,
\begin{equation}\label{apperror}
\begin{split}
& \quad \|f-\mathcal{Q}^{\varepsilon',\gamma}_{m,n,\alpha,\beta}(f) \|_{[-1,1],\infty} \\
& \leq 2c \sqrt{m+1} \\
& \times
\left\{
\begin{aligned}
&\inf_{p \in \mathbb{P}_n}\left(\|f-p\|_{[-1,1],\infty}
+ \sqrt{h_0^{(\alpha,\beta)}} /2 \cdot
n^{\mu+1}\varepsilon \|p\|_{[-\gamma,\gamma],\infty} \right), \quad \mu > -1/2,   \\
&\inf_{p \in \mathbb{P}_n} \left( \|f-p\|_{[-1,1],\infty}
+\sqrt{h_0^{(\alpha,\beta)}} /2 \cdot
 n^{1/2}\varepsilon \|p\|_{[-\gamma,\gamma],\infty} \right), \quad \mu \leq -1/2.
\end{aligned}
\right.
\end{split}
\end{equation}
\end{theorem}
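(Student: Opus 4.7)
The plan is to assemble the conclusion by chaining Theorem~3.1, Theorem~3.2, and Theorem~3.3 together under the carefully chosen tolerance $\varepsilon'=\sqrt{2}\,c_{n,\gamma,\alpha,\beta}\,\varepsilon$. The choice of $\varepsilon'$ is engineered precisely so that when Theorem~3.2 is applied to the operator $\mathcal{Q}^{\varepsilon',\gamma}_{m,n,\alpha,\beta}$, the auxiliary quantity $C(m,n,\gamma,\cdot)$ evaluates (for $C_1$) at the argument $\varepsilon'/(\sqrt{2}c_{n,\gamma,\alpha,\beta}) = \varepsilon$, which is exactly the regime where the hypothesis $C(m,n,\gamma,\varepsilon)\leq c$ is in force. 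Thus my first step would be to write out Theorem~3.2 with $\varepsilon$ replaced by $\varepsilon'$, obtaining $C_1\leq C(m,n,\gamma,\varepsilon)\leq c$ directly.

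Next I would bound $C_2$. Theorem~3.2 yields $C_2 \leq \sqrt{\gamma(m+1)h_0^{(\alpha,\beta)}/2}\cdot C(m,n,\gamma,\sqrt{m+1}\varepsilon)$. Here I would invoke the simple monotonicity observation that $C(m,n,\gamma,\cdot)$ is nonincreasing in its last argument (increasing that argument tightens the constraint $\|p\|_{[-\gamma,\gamma],\infty}\leq\varepsilon^{-1}$, shrinking the admissible set), so $C(m,n,\gamma,\sqrt{m+1}\varepsilon)\leq C(m,n,\gamma,\varepsilon)\leq c$ because $m\geq 1$. This gives $C_2\leq c\sqrt{\gamma(m+1)h_0^{(\alpha,\beta)}/2}$. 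The condition number estimate is then immediate: by \eqref{firstcond}, $\kappa(\mathcal{Q}^{\varepsilon',\gamma}_{m,n,\alpha,\beta})\leq\sqrt{m+1}\,C_1\leq c\sqrt{m+1}$.

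For the error bound I would substitute the two bounds just obtained into \eqref{firsterror} with $\varepsilon$ replaced by $\varepsilon'$. The first term becomes $(1+\sqrt{m+1}\,c)\|f-p\|_{[-1,1],\infty}$, which I would absorb into $2c\sqrt{m+1}\,\|f-p\|_{[-1,1],\infty}$ using $c>1$ and $m\geq 1$. The second term becomes
\begin{equation*}
C_2\,\varepsilon'\,\|p\|_{[-\gamma,\gamma],\infty}
\leq c\sqrt{\tfrac{\gamma(m+1)h_0^{(\alpha,\beta)}}{2}}\cdot\sqrt{2}\,c_{n,\gamma,\alpha,\beta}\,\varepsilon\,\|p\|_{[-\gamma,\gamma],\infty}
= c\sqrt{(m+1)h_0^{(\alpha,\beta)}}\,\bigl(\sqrt{\gamma}\,c_{n,\gamma,\alpha,\beta}\bigr)\,\varepsilon\,\|p\|_{[-\gamma,\gamma],\infty}.
\end{equation*}
At this point the only remaining piece is to replace $\sqrt{\gamma}\,c_{n,\gamma,\alpha,\beta}$ by its asymptotic value from \eqref{imimimconstant}, namely $\mathcal{O}(n^{\mu+1})$ when $\mu>-1/2$ and $\mathcal{O}(n^{1/2})$ otherwise; this produces the two cases displayed in \eqref{apperror}, after pulling the common factor $2c\sqrt{m+1}$ out and taking the infimum over $p\in\mathbb{P}_n$. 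The step I expect to require the most care is not conceptual but bookkeeping: correctly matching the constants $\sqrt{2}$, $\tfrac12$, and $\sqrt{h_0^{(\alpha,\beta)}}$ so that the factor $\sqrt{h_0^{(\alpha,\beta)}}/2$ in \eqref{apperror} emerges cleanly after the $\sqrt{\gamma}$ inside $c_{n,\gamma,\alpha,\beta}$ cancels the $\sqrt{\gamma}$ coming from $C_2$.
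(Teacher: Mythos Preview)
Your proposal is correct and follows essentially the same route as the paper: apply Theorem~3.2 at the shifted tolerance $\varepsilon'$ so that the $C_1$ bound reduces exactly to $C(m,n,\gamma,\varepsilon)\leq c$, use monotonicity of $C(m,n,\gamma,\cdot)$ in its last argument to handle the $\sqrt{m+1}$ factor in the $C_2$ bound, and then substitute into Theorem~3.1, absorbing $1+c\sqrt{m+1}\leq 2c\sqrt{m+1}$ and invoking the asymptotics \eqref{imimimconstant} for $c_{n,\gamma,\alpha,\beta}$. The constant bookkeeping you flag as the delicate point is exactly what the paper does as well, and your computation of it is correct.
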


\begin{proof}
Observe that $C(m,n,\gamma,\varepsilon)$ is a decreasing function of $\varepsilon$. Hence, Theorem 3.4, Theorem 3.5 and the condition \eqref{boundconstant} imply that,
\begin{equation}\nonumber
C_1(m,n,\alpha,\beta,\gamma,\varepsilon')
\leq C\left(m,n,\gamma,\varepsilon'/ (\sqrt{2} c_{n,\gamma,\alpha,\beta})  \right)
= C(m,n,\gamma,\varepsilon) \leq c,
\end{equation}
and
\begin{equation}\nonumber
\begin{split}
C_2(m,n,\alpha,\beta,\gamma,\varepsilon')
&\leq \sqrt{\gamma(m+1)h_0^{(\alpha,\beta)}/2} \cdot
C\left(m,n,\gamma,\sqrt{m+1}\varepsilon'/ (\sqrt{2}c_{n,\gamma,\alpha,\beta}) \right) \\
&\leq \sqrt{\gamma(m+1)h_0^{(\alpha,\beta)}/2} \cdot C(m,n,\gamma,\varepsilon)
\leq c \sqrt{\gamma(m+1)h_0^{(\alpha,\beta)}/2}.
\end{split}
\end{equation}
We now apply Theorem 3.1 to get
$$\kappa(\mathcal{Q}^{\varepsilon',\gamma}_{m,n,\alpha,\beta}) \leq \sqrt{m+1}C_1 \leq c\sqrt{m+1}. $$
For the error bound, we have
\begin{equation}\nonumber
\begin{split}
&\|f-\mathcal{Q}^{\varepsilon',\gamma}_{m,n,\alpha,\beta}(f) \|_{[-1,1],\infty} \\
& \leq (1+c \sqrt{m+1}) \|f-p\|_{[-1,1],\infty}
+  c\sqrt{\gamma(m+1)h_0^{(\alpha,\beta)}/2} \cdot \varepsilon' \|p\|_{[-\gamma,\gamma],\infty}\\
& \leq 2c\sqrt{m+1}  \left( \|f-p\|_{[-1,1],\infty}
+ \sqrt{\gamma h_0^{(\alpha,\beta)}}/2 \cdot c_{n,\gamma,\alpha,\beta} \varepsilon
\|p\|_{[-\gamma,\gamma],\infty}  \right)
\end{split}
\end{equation}
where in the second step we used the definition of $\varepsilon'$. The result now follows, since
$1+c \sqrt{m+1} \leq 2c \sqrt{m+1}$. Combined \eqref{imimimconstant}, we finish the proof.
\end{proof}

As a result, the overall Jacobi frame approximation error depends on how well $f$ can be approximated by a polynomial $p \in \mathbb{P}_n$ uniformly on $[-1,1]$. Moreover, we can give a specific decay rate of the term $\|f-p\|_{[-1,1],\infty}$ in \eqref{apperror} for analytic functions and differentiable functions.
Under the premise of keeping all the assumptions of Theorem 3.4 unchanged, we give the Jacobi frame approximation error estimates for analytic functions and differentiable functions without derivations in Theorem 3.5, 3.6 and 3.7 respectively.

\begin{theorem}
Let $E_\theta$ be the Bernstein ellipse with parameter $\theta > \tau := \gamma + \sqrt{\gamma^2-1}$. Then for all $f\in B(E_\theta)$,
\begin{equation}\label{firstanalyticTh1}
\begin{split}
&\quad \|f-\mathcal{Q}^{\varepsilon',\gamma}_{m,n,\alpha,\beta}(f) \|_{[-1,1],\infty} \\
& \leq c\sqrt{m+1}\|f\|_{E_\theta,\infty} \\
&\quad \times
\left\{
\begin{aligned}
& G^A_1(\theta) (\theta^{-n}+ n^{\mu+1} \varepsilon),
\quad G^A_3(\theta,\gamma,\alpha,\beta)\leq 1, \quad \mu\geq -1/2,  \\
& G^A_1(\theta) (\theta^{-n}+ n^{1/2} \varepsilon),
\quad G^A_3(\theta,\gamma,\alpha,\beta)\leq 1, \quad \mu < -1/2,  \\
& G^A_2(\theta,\gamma,\alpha,\beta) (\theta^{-n}+ n^{\mu+1} \varepsilon),
\quad G^A_3(\theta,\gamma,\alpha,\beta) > 1, \quad \mu\geq -1/2,    \\
& G^A_2(\theta,\gamma,\alpha,\beta) (\theta^{-n}+ n^{1/2} \varepsilon),
\quad G^A_3(\theta,\gamma,\alpha,\beta) > 1, \quad \mu< -1/2.    \\
\end{aligned}
\right.
\end{split}
\end{equation}
where the constants defined as
\begin{equation}
G^A_1(\theta)=\frac{4}{\theta-1}, \quad
G^A_2(\theta,\gamma,\alpha,\beta)=
\frac{2\theta \sqrt{h_0^{(\alpha,\beta)}}}{\theta-\gamma-\sqrt{\gamma^2-1}},
\end{equation}
\begin{equation}
 G^A_3(\theta,\gamma,\alpha,\beta)
 =\frac{G^A_2(\theta,\gamma,\alpha,\beta)}{G^A_1(\theta)}
=\frac{\theta(\theta-1)\sqrt{h_0^{(\alpha,\beta)}}}{2(\theta-\gamma-\sqrt{\gamma^2-1})}.
\end{equation}
\end{theorem}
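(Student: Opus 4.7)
The plan is to invoke Theorem 3.4 with a judicious choice of polynomial $p \in \mathbb{P}_n$ and then separately control the two ingredients $\|f-p\|_{[-1,1],\infty}$ and $\|p\|_{[-\gamma,\gamma],\infty}$ appearing in the bound \eqref{apperror}. Since $f \in B(E_\theta)$, the natural candidate is $p := p_n$, the degree-$n$ truncation of the Chebyshev expansion of $f$, $p_n(x)=\sum_{k=0}^n a_kT_k(x)$. The standard contour estimate for Chebyshev coefficients of an analytic function yields $|a_k|\leq 2\theta^{-k}\|f\|_{E_\theta,\infty}$, hence
$$\|f-p_n\|_{[-1,1],\infty}\leq\sum_{k>n}|a_k|\leq\frac{2\theta^{-n}}{\theta-1}\|f\|_{E_\theta,\infty},$$
which is exactly the $\theta^{-n}$ term with coefficient $\tfrac12 G_1^A(\theta)$.

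The critical step is to estimate $\|p_n\|_{[-\gamma,\gamma],\infty}$. Writing $\gamma=\cosh s$ so that $e^{s}=\tau=\gamma+\sqrt{\gamma^2-1}$, I have $T_k(\gamma)=\cosh(ks)\leq\tau^k$, and $\|T_k\|_{[-\gamma,\gamma],\infty}=T_k(\gamma)$ by monotonicity and parity of $T_k$ outside $[-1,1]$. Under the assumption $\theta>\tau$, the relevant geometric series converges and
$$\|p_n\|_{[-\gamma,\gamma],\infty}\leq\sum_{k=0}^{n}|a_k|T_k(\gamma)\leq 2\|f\|_{E_\theta,\infty}\sum_{k=0}^{\infty}(\tau/\theta)^k=\frac{2\theta}{\theta-\tau}\|f\|_{E_\theta,\infty},$$
which, once multiplied by $\sqrt{h_0^{(\alpha,\beta)}}/2$ from Theorem 3.4, yields the $\varepsilon$-term with coefficient $\tfrac12 G_2^A(\theta,\gamma,\alpha,\beta)$.

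Inserting both estimates into the bound \eqref{apperror}, in the $\mu\geq -1/2$ case one obtains
$$\|f-\mathcal{Q}^{\varepsilon',\gamma}_{m,n,\alpha,\beta}(f)\|_{[-1,1],\infty}\leq c\sqrt{m+1}\|f\|_{E_\theta,\infty}\bigl(G_1^A(\theta)\,\theta^{-n}+G_2^A(\theta,\gamma,\alpha,\beta)\, n^{\mu+1}\varepsilon\bigr),$$
and the analogous inequality with $n^{1/2}$ in place of $n^{\mu+1}$ when $\mu<-1/2$. A trivial case split on the ratio $G_3^A=G_2^A/G_1^A$ — factoring out whichever of $G_1^A$, $G_2^A$ is larger — then collapses the right-hand side into the four displayed cases of \eqref{firstanalyticTh1}.

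The main obstacle I expect is the $\|p_n\|_{[-\gamma,\gamma],\infty}$ bound, precisely because the series $\sum|a_k|T_k(\gamma)$ diverges once $\theta\leq\tau$; the Bernstein-ellipse hypothesis $\theta>\tau$ is used exactly here and nowhere else in an essential way. Everything else is routine book-keeping with the constants already packaged in $G_1^A$, $G_2^A$, $G_3^A$.
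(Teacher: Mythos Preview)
Your argument is correct. The paper does not actually supply a proof of this theorem: immediately before Theorems 3.5--3.7 the author writes that these results are given ``without derivations,'' so there is no proof in the paper to compare against. Your route---choosing $p$ to be the truncated Chebyshev expansion, invoking the standard Bernstein-ellipse bound $|a_k|\le 2\theta^{-k}\|f\|_{E_\theta,\infty}$, and using $\|T_k\|_{[-\gamma,\gamma],\infty}=T_k(\gamma)\le\tau^k$ to sum the geometric series (which is exactly where the hypothesis $\theta>\tau$ is needed)---is the natural one and matches the template of the Adcock--Shadrin paper cited as the source of Theorem 3.4; it is almost certainly what the author intended. The final case split on $G_3^A\lessgtr 1$ is, as you say, just factoring out $\max\{G_1^A,G_2^A\}$.
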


\begin{theorem}
Let $E_\theta$ be the Bernstein ellipse with parameter $1 <\theta< \tau :=\gamma + \sqrt{\gamma^2-1}$. Then for all $f\in B(E_\theta)$,
\begin{equation}\label{secondanalyticTh1}
\begin{split}
&\quad \|f-\mathcal{Q}^{\varepsilon',\gamma}_{m,n,\alpha,\beta}(f) \|_{[-1,1],\infty} \\
&\leq c\sqrt{m+1} \|f\|_{E_\theta,\infty} \\
& \times
\left\{
\begin{aligned}
&  H^A_1(\theta)
\left(\theta^{-n}+ (1+n^{\mu+1}) \varepsilon^{\frac{\log\theta}{\log\tau}} \right),
\quad H^A_3(\theta,\gamma,\alpha,\beta)\leq 1, \quad \mu\geq -1/2,  \\
&  H^A_1(\theta)
\left(\theta^{-n}+ (1+n^{1/2}) \varepsilon^{\frac{\log\theta}{\log\tau}} \right),
\quad H^A_3(\theta,\gamma,\alpha,\beta)\leq 1, \quad \mu < -1/2,  \\
&  H^A_2(\theta,\gamma,\alpha,\beta)
\left(\theta^{-n}+ (1+n^{\mu+1}) \varepsilon^{\frac{\log\theta}{\log\tau}} \right),
\quad H^A_3(\theta,\gamma,\alpha,\beta) > 1, \quad \mu\geq -1/2,    \\
& H^A_2(\theta,\gamma,\alpha,\beta)
\left(\theta^{-n}+ (1+n^{1/2}) \varepsilon^{\frac{\log\theta}{\log\tau}} \right),
\quad H^A_3(\theta,\gamma,\alpha,\beta) > 1, \quad \mu< -1/2.    \\
\end{aligned}
\right.
\end{split}
\end{equation}
where the constants defined as
\begin{equation}
H^A_1(\theta)=\frac{4}{\theta-1}, \quad
H^A_2(\theta,\gamma,\alpha,\beta)
= \frac{2(\gamma+\sqrt{\gamma^2-1})\sqrt{h_0^{(\alpha,\beta)}}}{\gamma+\sqrt{\gamma^2-1}-\theta},
\end{equation}
\begin{equation}
H^A_3(\theta,\gamma,\alpha,\beta)
 =\frac{H^A_2(\theta,\gamma,\alpha,\beta)}{H^A_1(\theta)}
=\frac{(\gamma+\sqrt{\gamma^2-1} )(\theta-1)\sqrt{h_0^{(\alpha,\beta)}}}
{2(\gamma+\sqrt{\gamma^2-1}-\theta)}.
\end{equation}
\end{theorem}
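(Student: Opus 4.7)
The plan is to invoke Theorem 3.4 and choose a polynomial $p\in\mathbb{P}_n$ that jointly controls the uniform approximation error $\|f-p\|_{[-1,1],\infty}$ and the growth $\|p\|_{[-\gamma,\gamma],\infty}$. Unlike Theorem 3.5, here $\theta<\tau=\gamma+\sqrt{\gamma^2-1}$, so the Bernstein ellipse $E_\theta$ does \emph{not} enclose $[-\gamma,\gamma]$, and the truncated Chebyshev expansion of $f$ will generically grow on the extended interval. A sub-optimal degree therefore has to be chosen.

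First I would let $p_k=\sum_{j=0}^k a_j T_j$ be the degree-$k$ truncated Chebyshev series of $f$ for a free parameter $0\leq k\leq n$, and use the standard Bernstein estimate $|a_j|\leq 2\|f\|_{E_\theta,\infty}\theta^{-j}$. A geometric-sum calculation then yields $\|f-p_k\|_{[-1,1],\infty}\leq \frac{2\theta^{-k}}{\theta-1}\|f\|_{E_\theta,\infty}$, while on the extended interval the bound $\|T_j\|_{[-\gamma,\gamma],\infty}=T_j(\gamma)\leq \tau^j$ gives $\|p_k\|_{[-\gamma,\gamma],\infty}\leq \frac{2\|f\|_{E_\theta,\infty}\,(\tau/\theta)^k}{1-\theta/\tau}$. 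Substituting into Theorem 3.4, for every $k\leq n$ one obtains (up to absolute constants)
\begin{equation*}
\|f-\mathcal{Q}^{\varepsilon',\gamma}_{m,n,\alpha,\beta}(f)\|_{[-1,1],\infty}
\leq C\sqrt{m+1}\,\|f\|_{E_\theta,\infty}\bigl(\theta^{-k}+n^{\mu+1}\varepsilon\,(\tau/\theta)^k\bigr),
\end{equation*}
with $n^{1/2}$ replacing $n^{\mu+1}$ when $\mu<-1/2$.

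Next I would optimise over $k$. Setting the two terms equal gives $\tau^{-k^\star}=\varepsilon n^{\mu+1}$, i.e.\ $k^\star=-\log(\varepsilon n^{\mu+1})/\log\tau$, at which value both contributions equal $(\varepsilon n^{\mu+1})^{\log\theta/\log\tau}\leq n^{\mu+1}\varepsilon^{\log\theta/\log\tau}$ because $\log\theta/\log\tau<1$. I would then split into cases: when $k^\star\geq n$ (equivalently $\varepsilon n^{\mu+1}\tau^n\leq 1$) take $k=n$, so that the second term is dominated by the first and the bound reduces to the $\theta^{-n}$ piece; when $k^\star<n$ take $k=\lfloor k^\star\rfloor$, so that both terms are controlled by $(1+n^{\mu+1})\varepsilon^{\log\theta/\log\tau}$. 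Adding the two cases produces the asserted form $\theta^{-n}+(1+n^{\mu+1})\varepsilon^{\log\theta/\log\tau}$, and the $\mu\gtrless -1/2$ dichotomy is inherited from \eqref{imimimconstant}.

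To pin down the three constants $H^A_1,H^A_2,H^A_3$ I would keep the prefactors explicit throughout the geometric sums: $H^A_1=4/(\theta-1)$ records the constant from the Chebyshev truncation bound, whereas $H^A_2$ arises from the geometric factor $1/(1-\theta/\tau)=\tau/(\tau-\theta)$ multiplied by $\sqrt{h_0^{(\alpha,\beta)}}$ coming from the regularisation term of Theorem 3.4 after substituting $\varepsilon'=\sqrt{2}\,c_{n,\gamma,\alpha,\beta}\varepsilon$ and the asymptotic form of $c_{n,\gamma,\alpha,\beta}$ from Lemma 2.1. The ratio $H^A_3=H^A_2/H^A_1$ then selects which of the two prefactors is larger, giving the four sub-cases in \eqref{secondanalyticTh1}. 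The hard part will be the balancing step: because $\theta<\tau$ the naive choice $k=n$ admits an unbounded $(\tau/\theta)^n$ growth, so one must argue that a strictly smaller degree is sharper and carefully track the $n^{\mu+1}\varepsilon$ weighting through the exponent $\log\theta/\log\tau$ to recover the clean additive form $(1+n^{\mu+1})\varepsilon^{\log\theta/\log\tau}$ rather than the messier composite $(\varepsilon n^{\mu+1})^{\log\theta/\log\tau}$.
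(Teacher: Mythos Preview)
The paper does not actually prove this theorem: it states Theorems~3.5--3.7 explicitly ``without derivations,'' deferring to the argument in Adcock--Shadrin for the Legendre case. Your proposal is therefore not to be compared against a proof in the paper, but it is the right strategy and would succeed: invoke Theorem~3.4, take $p=p_k$ the degree-$k$ Chebyshev truncation with $k\le n$ a free parameter, bound $\|f-p_k\|_{[-1,1],\infty}$ and $\|p_k\|_{[-\gamma,\gamma],\infty}$ via the Bernstein coefficient estimate together with $\|T_j\|_{[-\gamma,\gamma],\infty}\le\tau^j$, and then optimise.

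One refinement to your balancing step removes what you flag as the ``hard part.'' Rather than equating the two terms (which entangles $n^{\mu+1}$ into the exponent and leaves you with $(\varepsilon n^{\mu+1})^{\log\theta/\log\tau}$), simply set $k^\star=\lfloor\log(1/\varepsilon)/\log\tau\rfloor$, i.e.\ choose $k^\star$ from $\varepsilon$ alone. Then with $r=\log\theta/\log\tau\in(0,1)$ one has $\theta^{-k^\star}\le\theta\,\varepsilon^{r}$ for the first term, and for the second term $\varepsilon(\tau/\theta)^{k^\star}=\varepsilon^{r}(\varepsilon\tau^{k^\star})^{1-r}\le\varepsilon^{r}$ since $\varepsilon\tau^{k^\star}\le 1$. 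Substituting, the bracket in Theorem~3.4 becomes at most
\[
H^A_1\,\varepsilon^{r}+H^A_2\,n^{\mu+1}\varepsilon^{r}\le\max(H^A_1,H^A_2)\,(1+n^{\mu+1})\,\varepsilon^{r},
\]
which is exactly the claimed additive form, and the ``$+1$'' is seen to arise not from integer rounding but from the elementary bound $H^A_1+H^A_2 n^{\mu+1}\le\max(H^A_1,H^A_2)(1+n^{\mu+1})$. The four sub-cases then drop out immediately: $\max(H^A_1,H^A_2)$ is $H^A_1$ or $H^A_2$ according to $H^A_3\le 1$ or $H^A_3>1$, and the power of $n$ is $\mu+1$ or $1/2$ according to the dichotomy in \eqref{imimimconstant}. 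When $k^\star\ge n$ you take $k=n$; the same identity $\varepsilon(\tau/\theta)^{n}=\varepsilon^{r}(\varepsilon\tau^{n})^{1-r}\le\varepsilon^{r}$ still applies (because $\varepsilon\tau^{n}\le 1$ in this regime), so the second term is again dominated by $(1+n^{\mu+1})\varepsilon^{r}$ and the first term supplies the $\theta^{-n}$ contribution.
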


\begin{theorem}
For all $k \in \mathbb{N}$ and $f\in C^k([-1,1])$,
\begin{equation}
\begin{split}
&\quad \|f-\mathcal{Q}^{\varepsilon',\gamma}_{m,n,\alpha,\beta}(f) \|_{[-1,1],\infty} \\
& \leq 4c_{k,\gamma} c'_{k,\gamma} c \sqrt{m+1} \|f\|_{C^k([-1,1])}
\left\{
\begin{aligned}
\left(n^{-k} +\sqrt{h_0^{(\alpha,\beta)}}/(4c'_{k,\gamma}) \cdot n^\mu \varepsilon \right),
\quad \mu\geq -1/2,  \\
\left(n^{-k} +\sqrt{ h_0^{(\alpha,\beta)}}/(4c'_{k,\gamma}) \cdot n^{1/2} \varepsilon \right),
\quad \mu<-1/2.
\end{aligned}
\right.
\end{split}
\end{equation}
The constants $c_{k,\gamma} $ and $ c'_{k,\gamma}$ depend on $k,\gamma$.
\end{theorem}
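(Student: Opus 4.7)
The strategy is to invoke Theorem 3.4 with a carefully chosen polynomial $p \in \mathbb{P}_n$ and then take that particular choice (rather than the infimum) as an upper bound. The two quantities appearing in \eqref{apperror}, namely $\|f-p\|_{[-1,1],\infty}$ and $\|p\|_{[-\gamma,\gamma],\infty}$, must be controlled simultaneously: the uniform approximation on $[-1,1]$ should decay like $n^{-k}$ while the extension of $p$ to the enlarged interval $[-\gamma,\gamma]$ should stay bounded by a constant multiple of $\|f\|_{C^k([-1,1])}$.

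First, I would extend $f$ from $[-1,1]$ to a function $\tilde{f} \in C^k([-\gamma,\gamma])$ via a Hestenes--Seeley reflection (or, more generally, a Whitney-type extension), thereby obtaining
$$\|\tilde{f}\|_{C^k([-\gamma,\gamma])} \leq c_\gamma \|f\|_{C^k([-1,1])}.$$
Second, I would apply the classical Jackson theorem on $[-\gamma,\gamma]$ to produce $p_n \in \mathbb{P}_n$ with $\|\tilde{f}-p_n\|_{[-\gamma,\gamma],\infty} \leq c_{k,\gamma} n^{-k} \|\tilde{f}\|_{C^k([-\gamma,\gamma])}$, which after restricting to $[-1,1]$ and absorbing $c_\gamma$ yields the target rate $\|f-p_n\|_{[-1,1],\infty} \leq c_{k,\gamma} n^{-k} \|f\|_{C^k([-1,1])}$. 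Third, the triangle inequality gives
$$\|p_n\|_{[-\gamma,\gamma],\infty} \leq \|\tilde{f}\|_{[-\gamma,\gamma],\infty} + \|\tilde{f}-p_n\|_{[-\gamma,\gamma],\infty} \leq c'_{k,\gamma} \|f\|_{C^k([-1,1])},$$
at least for $n$ large enough that the Jackson remainder is dominated by $\|\tilde{f}\|_{[-\gamma,\gamma],\infty}$.

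The final step is to substitute the two bounds into \eqref{apperror}, separating the regimes $\mu \geq -1/2$ and $\mu < -1/2$ exactly as Theorem 3.4 already does, and to collect the absolute factors $c_{k,\gamma}$, $c'_{k,\gamma}$ and $c$ (with the factor of $4$ appearing as a cushion when one merges $1$ and lower-order terms into the leading constants). The asymptotic behaviour \eqref{imimimconstant} then produces the stated powers $n^{\mu}$ versus $n^{1/2}$ in the $\varepsilon$-term.

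The main obstacle I anticipate is the extension step: one needs an explicit, quantitative control of $\|\tilde{f}\|_{C^k([-\gamma,\gamma])}$ in terms of $\|f\|_{C^k([-1,1])}$. A Hestenes reflection about $x = \pm 1$ delivers such a bound for any fixed $k$ and $\gamma$, but one must verify the matching of derivatives up to order $k$ at the endpoints and patch the two reflections together on $[-\gamma,\gamma]$ with a smooth cutoff; the dependence of the resulting constant on $k$ and $\gamma$ is exactly what ends up in $c_{k,\gamma}$ and $c'_{k,\gamma}$. Once this extension is in hand, the remaining ingredients (Jackson's theorem and the triangle inequality) are entirely standard.
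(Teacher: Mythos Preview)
The paper does not actually supply a proof of this theorem: immediately before Theorems~3.5--3.7 it says the estimates are given ``without derivations.'' Your proposal is therefore being compared against the \emph{intended} argument, which (following the Adcock--Shadrin paper the authors cite) is exactly the route you describe: extend $f$ to a $C^k$ function $\tilde f$ on $[-\gamma,\gamma]$, apply Jackson's theorem on the extended interval to obtain a polynomial $p_n$ with $\|f-p_n\|_{[-1,1],\infty}\lesssim n^{-k}\|f\|_{C^k}$ and $\|p_n\|_{[-\gamma,\gamma],\infty}\lesssim \|f\|_{C^k}$, and then substitute into Theorem~3.4. So your plan is correct and essentially the same as the paper's (implicit) approach.

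One point deserves flagging. Your final sentence asserts that the substitution ``produces the stated powers $n^{\mu}$ versus $n^{1/2}$,'' but Theorem~3.4 already carries the factor $n^{\mu+1}$ (for $\mu>-1/2$) in the $\varepsilon$-term, coming from $c_{n,\gamma,\alpha,\beta}=\mathcal{O}(n^{\mu+1}/\sqrt{\gamma})$ in \eqref{imimimconstant}. Plugging your bounds for $\|f-p_n\|_{[-1,1],\infty}$ and $\|p_n\|_{[-\gamma,\gamma],\infty}$ into \eqref{apperror} therefore yields $n^{\mu+1}\varepsilon$, not $n^{\mu}\varepsilon$; compare also Theorems~3.5 and~3.6, which both display $n^{\mu+1}$. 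The exponent $n^{\mu}$ in the statement of Theorem~3.7 thus appears to be a typographical slip in the paper rather than a consequence of a sharper argument, and your method does not recover it. Apart from this bookkeeping issue, the extension/Jackson/substitution outline is complete; the Hestenes reflection you mention is a standard way to realise the extension step with constants depending only on $k$ and $\gamma$.
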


Notice, we show that the results in \cite{BENADCOCK} are special case of this paper, that is, $\alpha=\beta=0$. And the error decreases only down to a constant tolerance $\varepsilon$ when the value of $\mu$ is appropriate. Once the value of $\mu$ is larger, the second term in brackets on the right side of equation \eqref{apperror} becomes more dominant and even leads to error divergence. Further, the value of $\gamma$ also influences the approximation error, as detailed in the numerical experiments in Sec. 4.

\section{Numerical Experiments}
In the following experiments, we compute the uniform error of the approximation with threshold parameter $\varepsilon$ rather than $\varepsilon'$ on a grid of 10,000 equispaced points in $[-1, 1]$.

\subsection{The influences of parameters $\varepsilon$ and $\eta$}
Let $(\alpha,\beta)=(1/3,1/2)$ to be fixed in Figure~\ref{two}-Figure~\ref{four} firstly. We show the uniform Jacobi frame approximation error versus $n$ of functions
$f_1 = 1/(1+x^2)$ for various values of $\gamma,\varepsilon,\eta$ in Figure~\ref{two}. This function is analytic in $B(E_\theta)$ with parameters $\theta_1=1+\sqrt{2}$, where $E_{\theta}$ is defined as $E_\theta=\{(z+z^{-1})/2: z\in \mathbb{C}, 1\leq |z|\leq \theta\}$.

\begin{figure}[bh]
\centering
{\includegraphics[width=5cm]{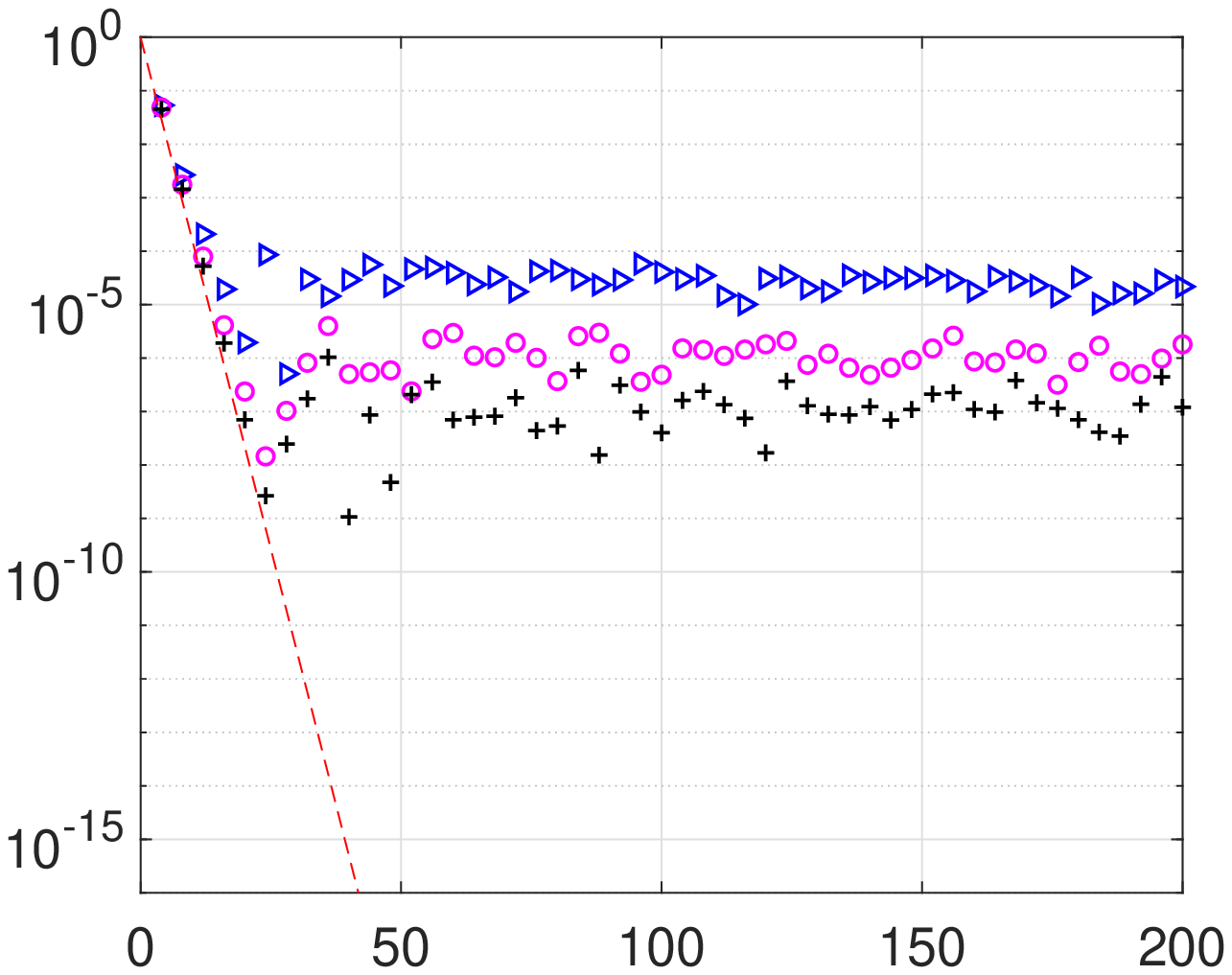}}
{\includegraphics[width=5cm]{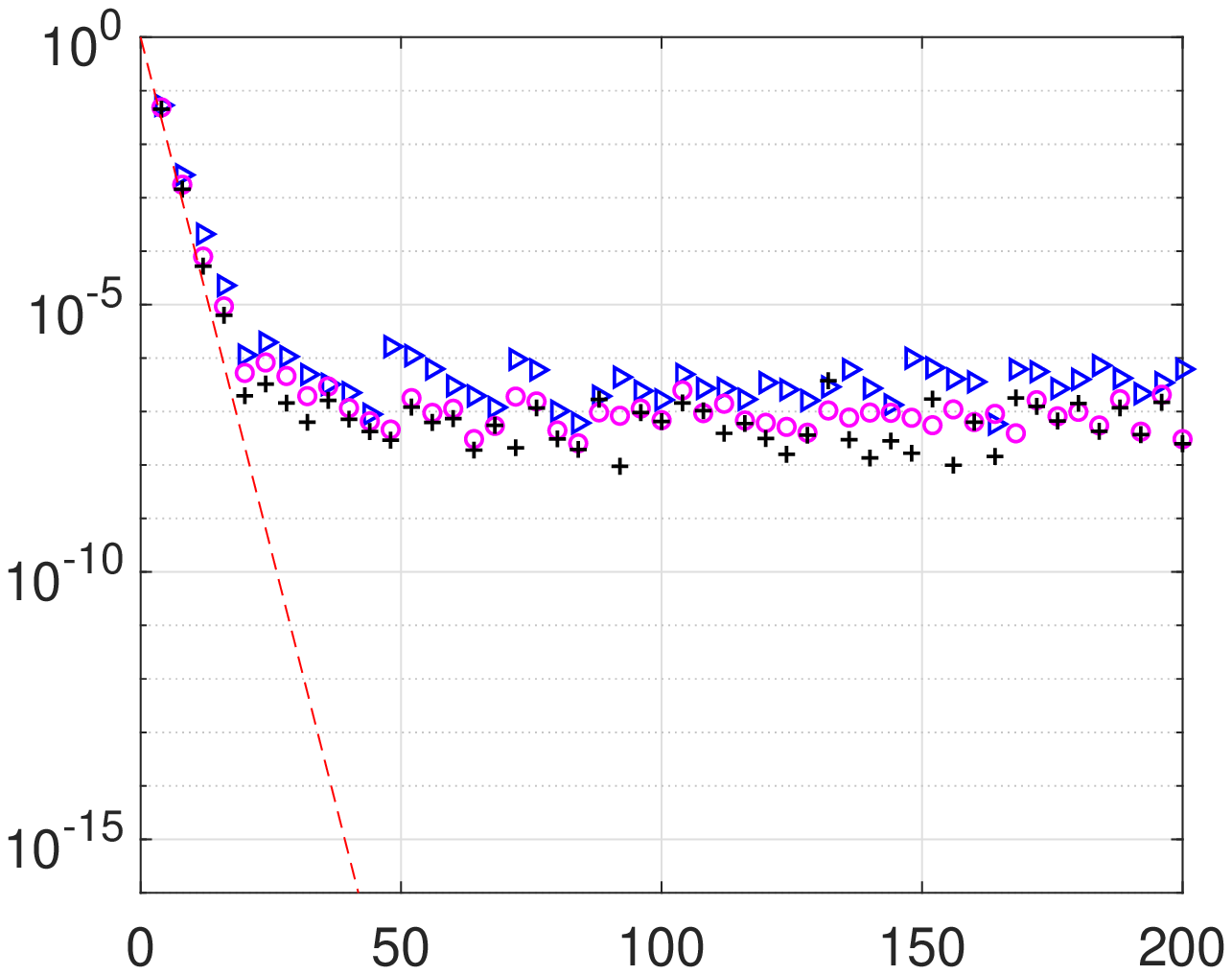}}
{\includegraphics[width=5cm]{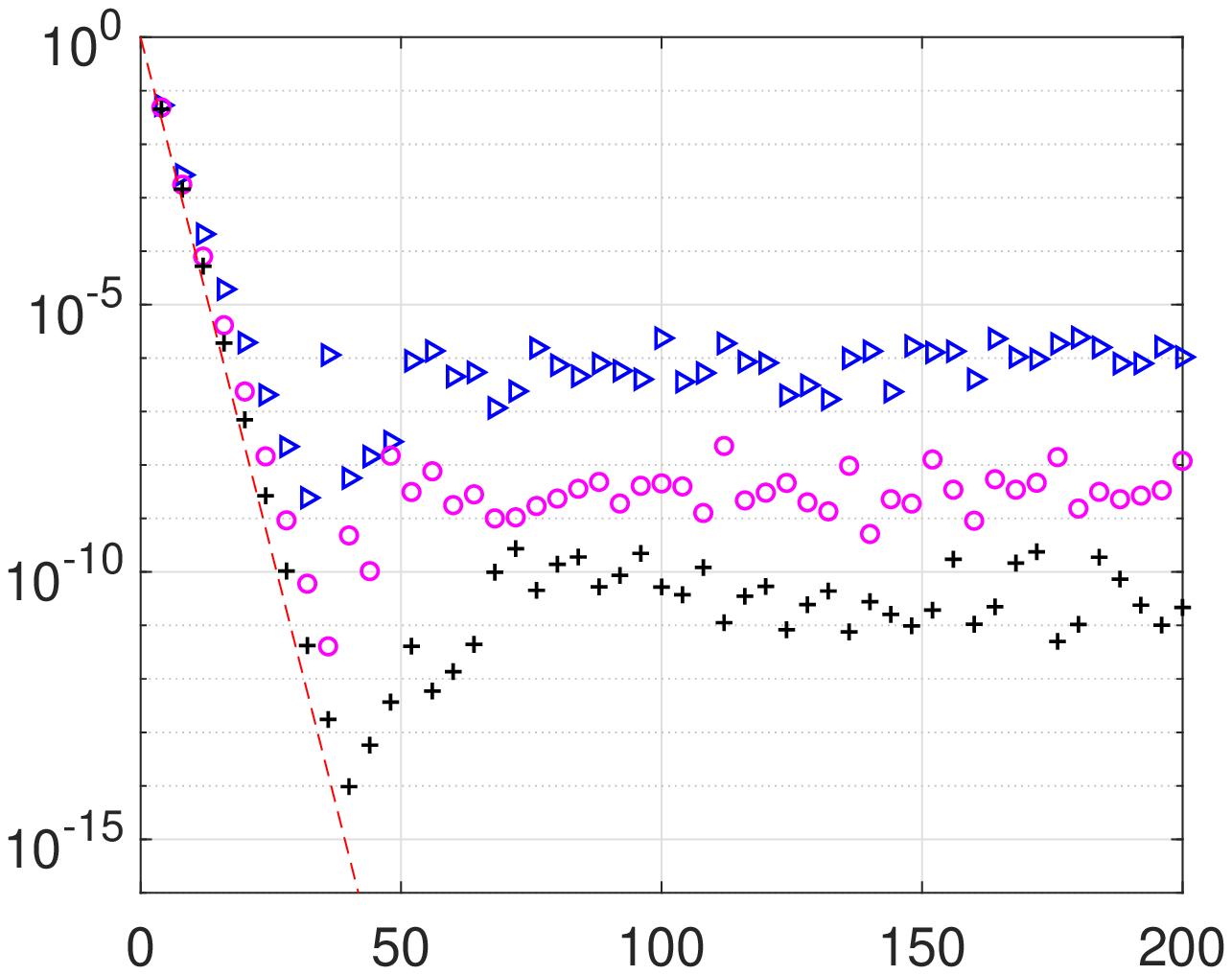}}
{\includegraphics[width=5cm]{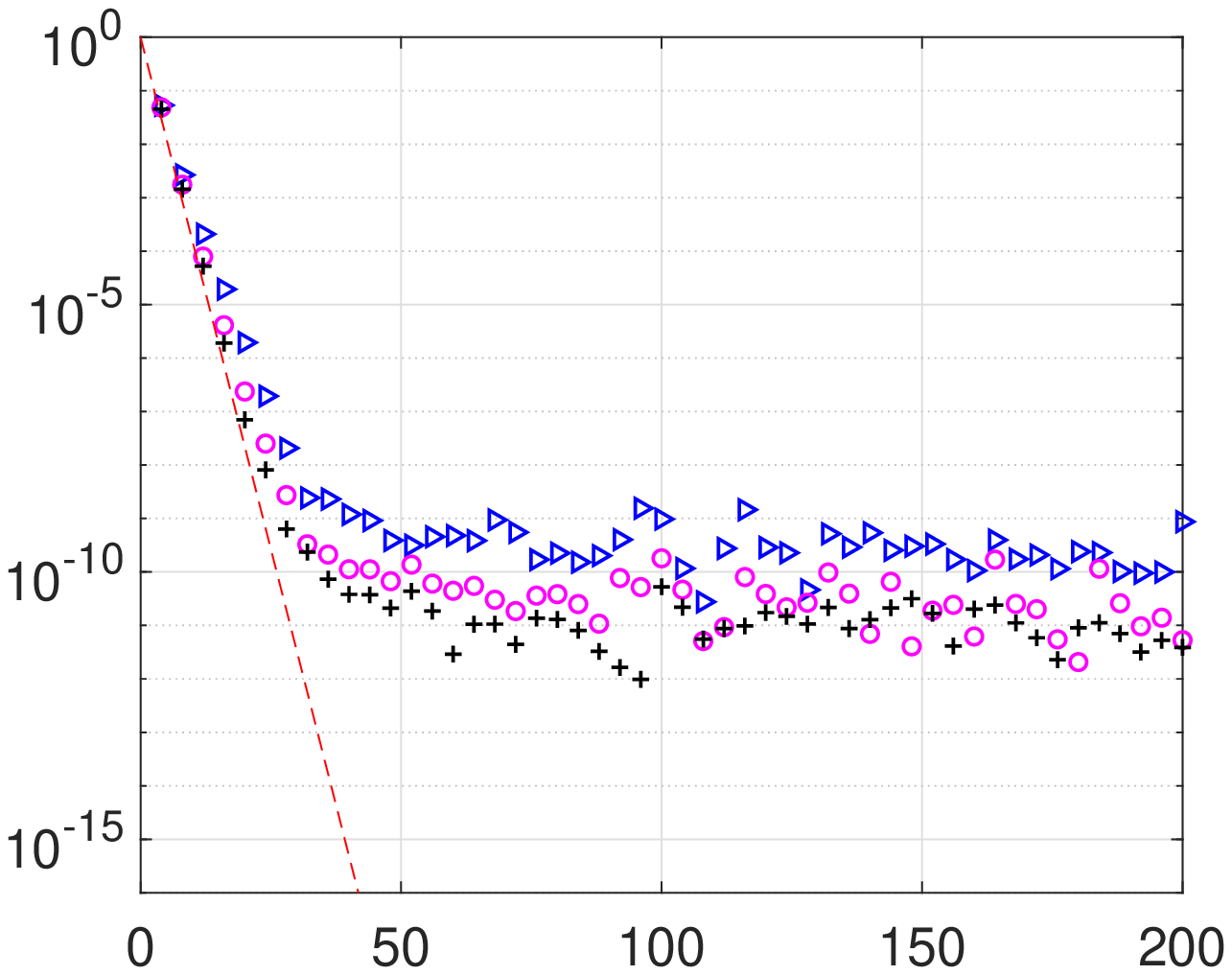}}
{\includegraphics[width=5cm]{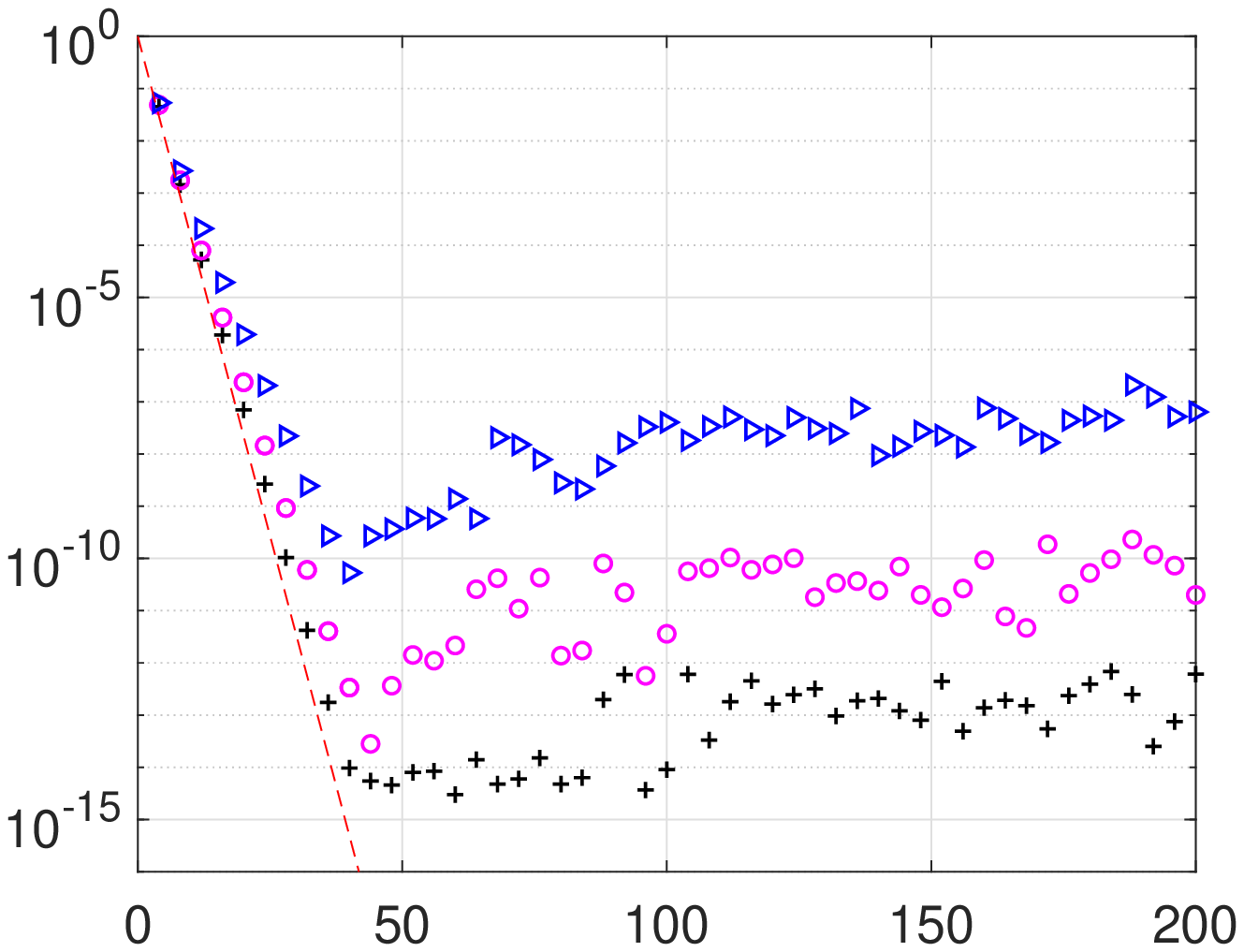}}
{\includegraphics[width=5cm]{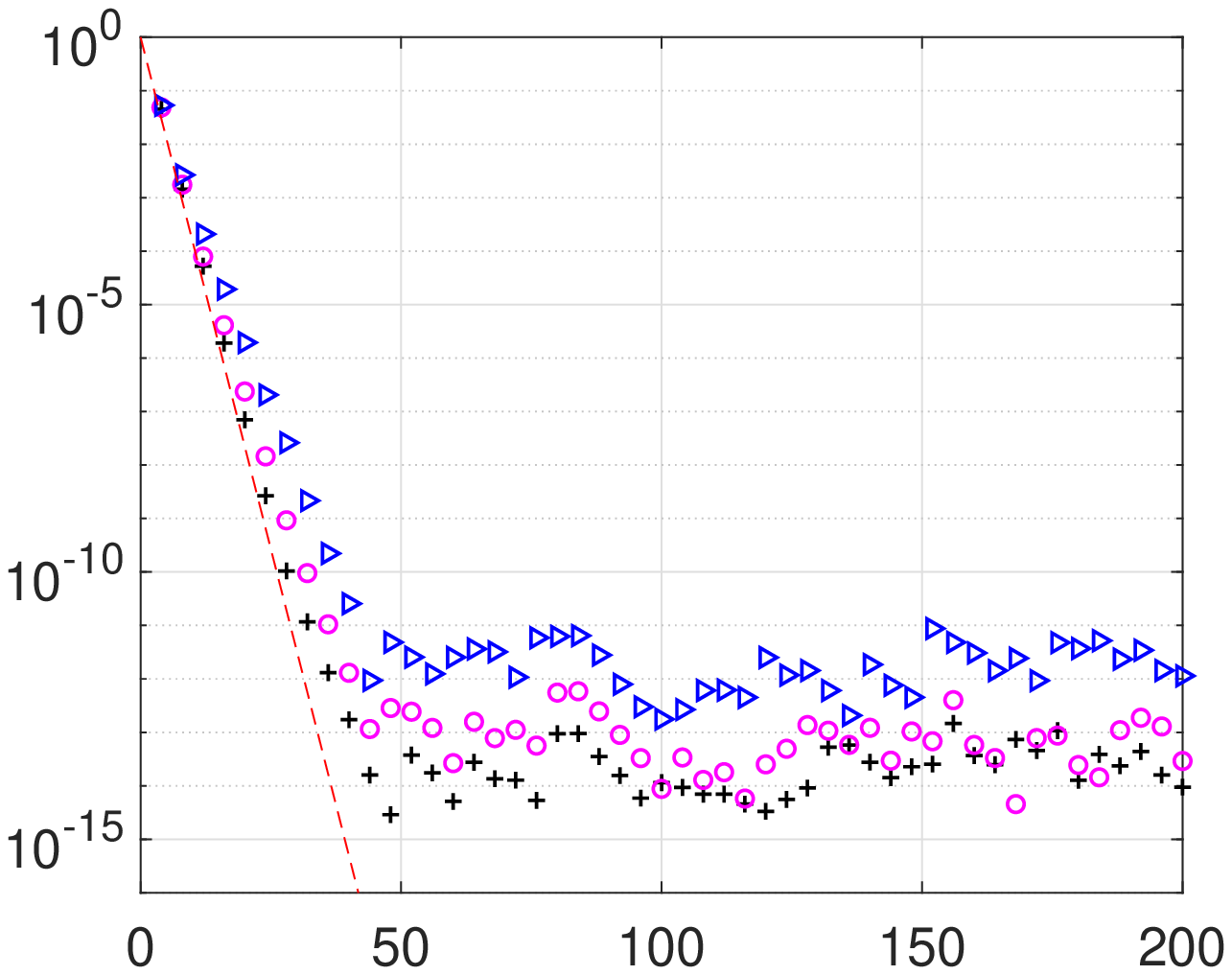}}
\caption{Uniform approximation error versus $n$ for approximation function $f_1$ via $\mathcal{Q}^{\varepsilon,\gamma}_{m,n,\alpha,\beta}$ using various different values of $\gamma,\varepsilon,\eta$. The values of $\gamma$ are $\gamma = 1.2$ (left) and $\gamma = 1.8$ (right). The values of $\varepsilon$ are $\varepsilon=10^{-6}$ (top), $\varepsilon=10^{-10}$ (middle) and $\varepsilon=10^{-14}$ (bottom). The values of $\eta$ are
$\eta=1.25$ (blue triangles), $\eta=2$ (magenta circle) and $\eta=4$ (black plus). The red line indicates $\theta_1^{-n}$.\label{two}}
\end{figure}

\begin{figure}[th]
\centering
\subfigure[$f_1$]
{\psfig{file=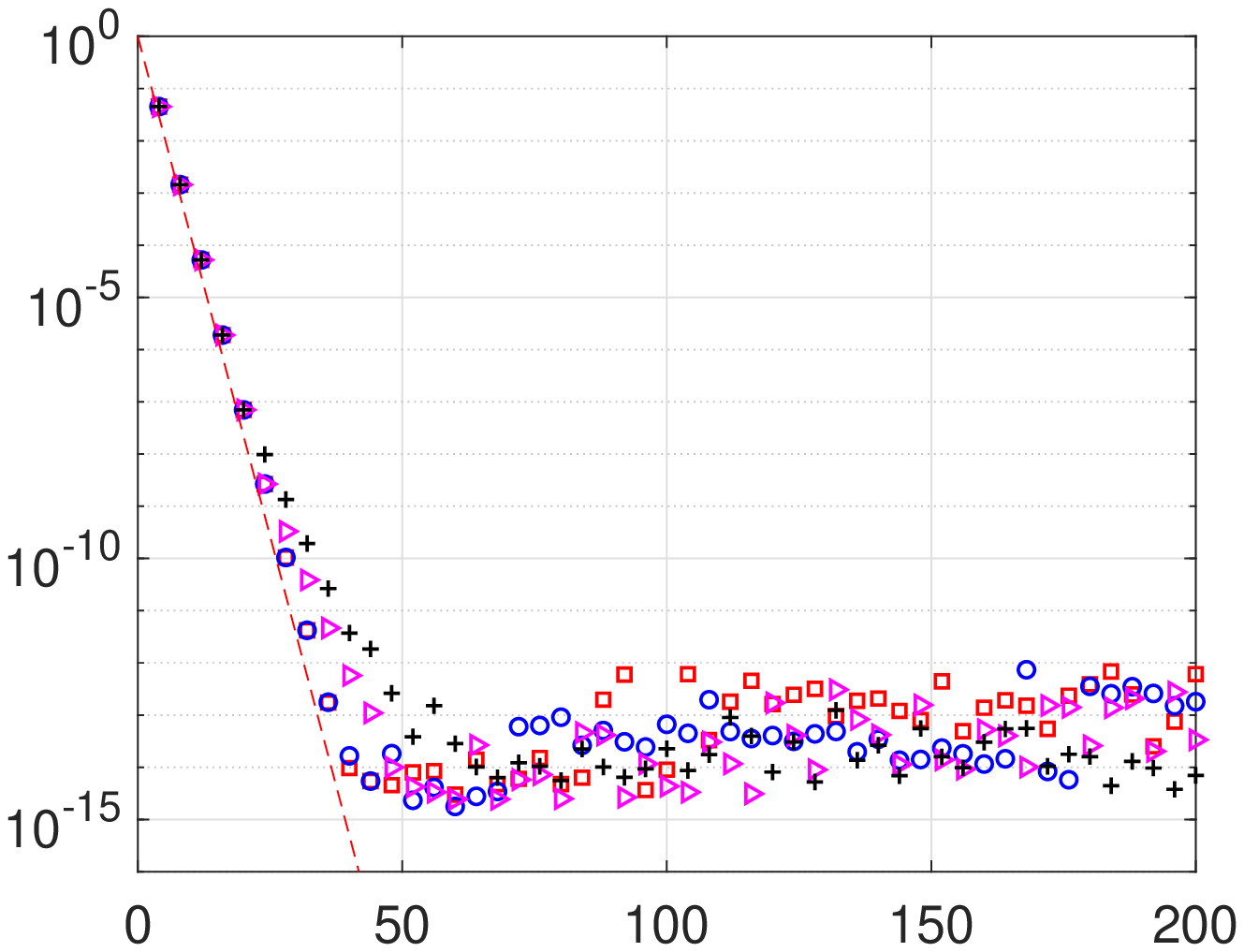,width=5cm}}
\quad
\subfigure[$f_2$]
{\psfig{file=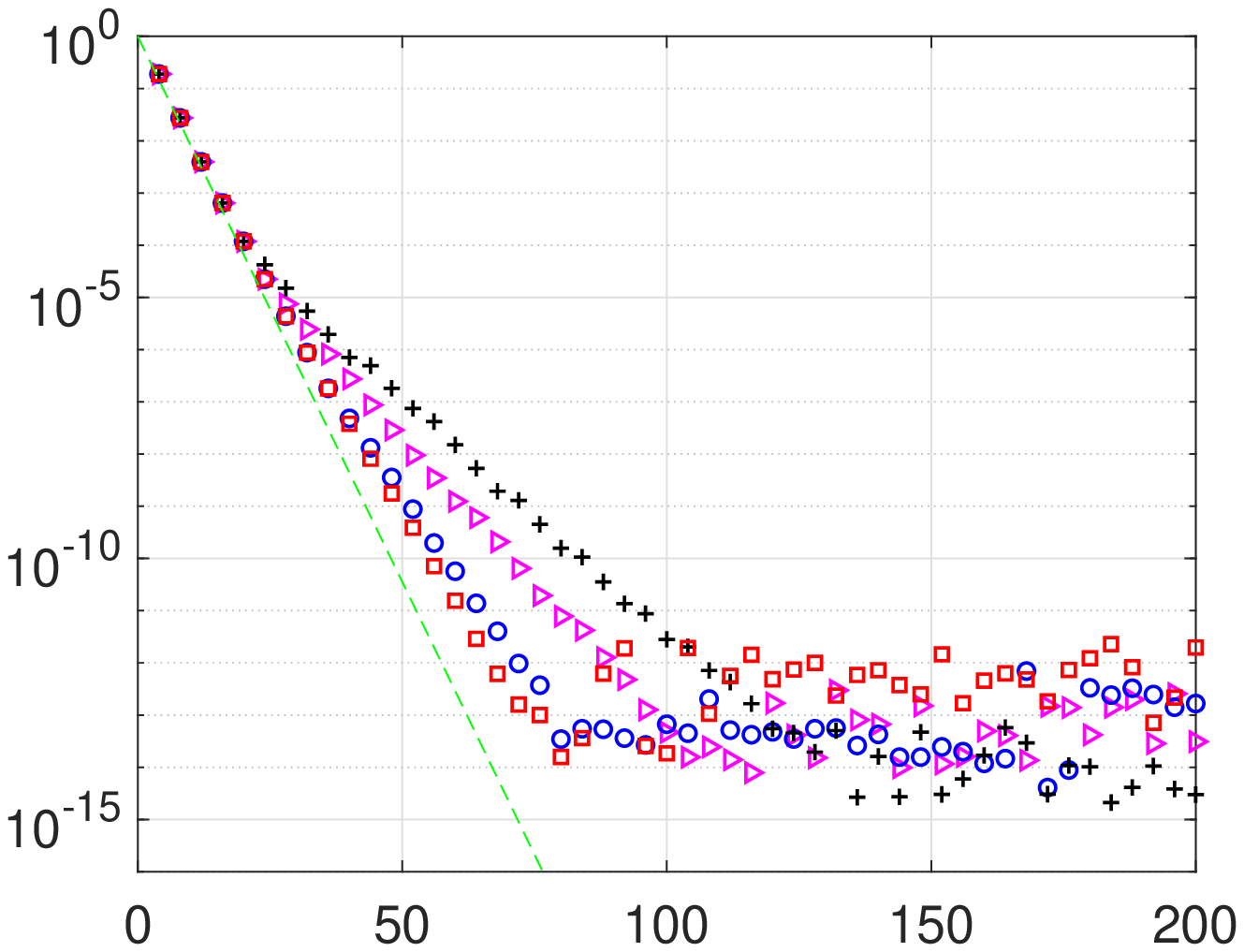,width=5cm}}
\quad
\subfigure[$f_3$]
{\psfig{file=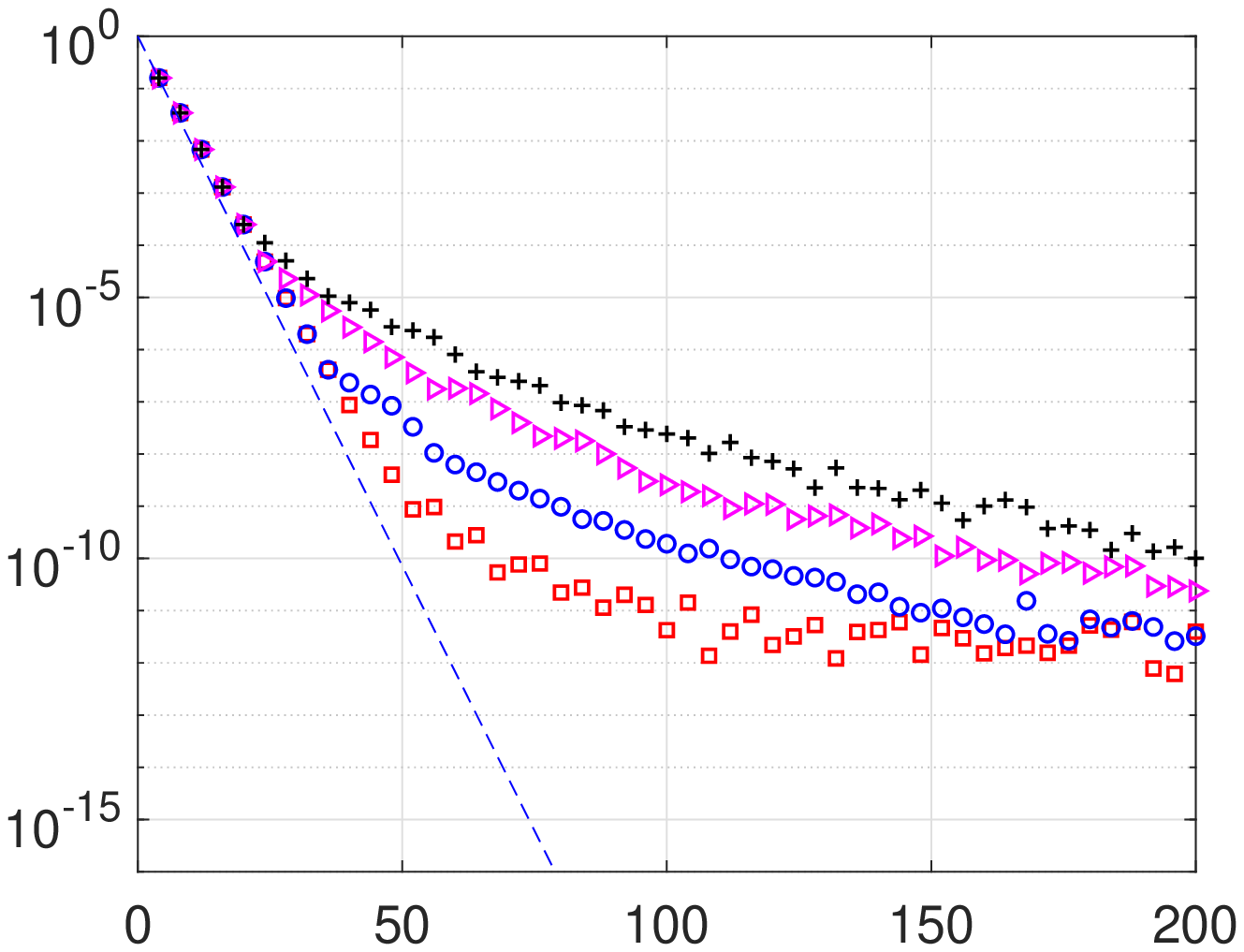,width=5cm}}
\quad
\subfigure[$f_4$]
{\psfig{file=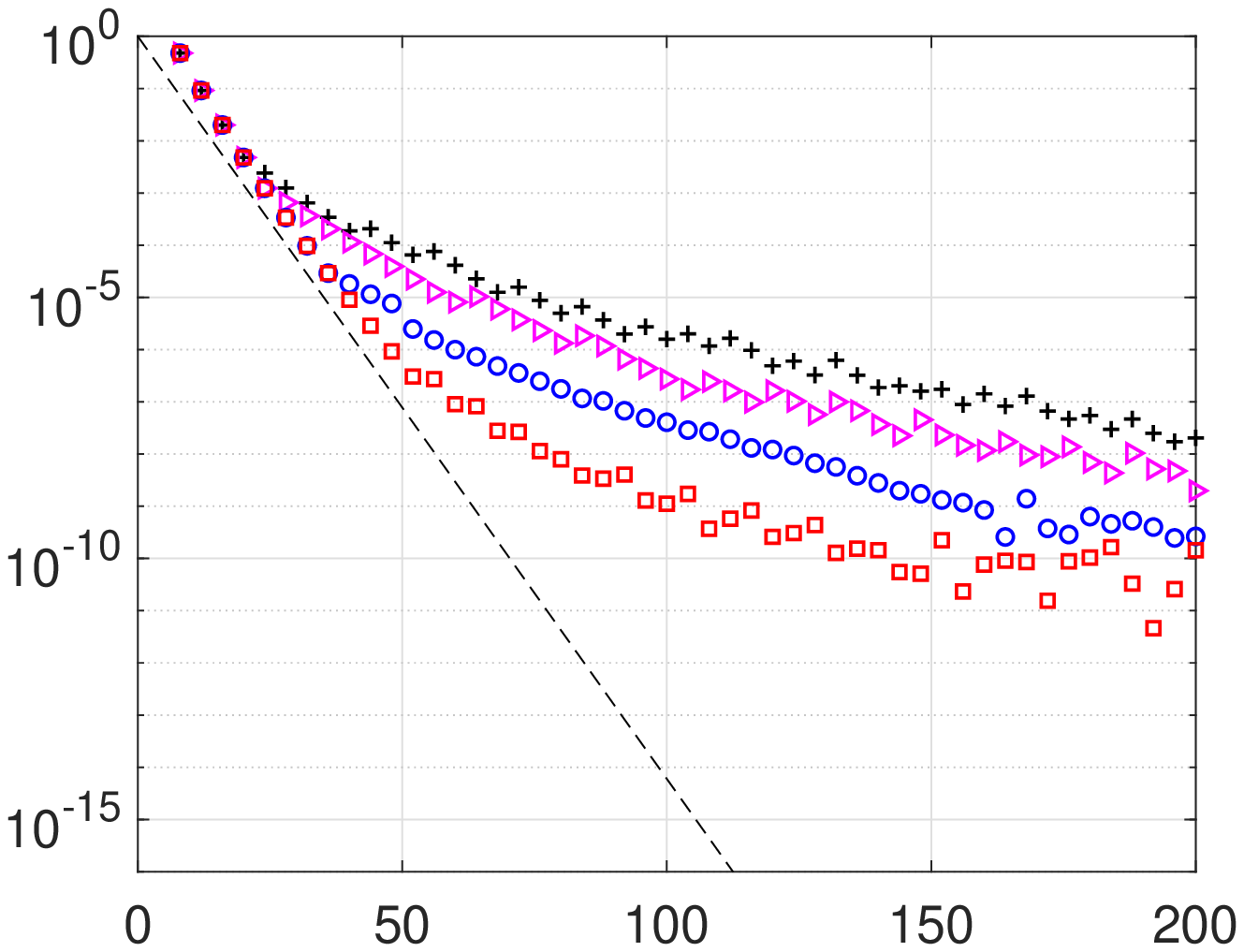,width=5cm}}
\vspace*{8pt}
\caption{Uniform Jacobi frame approximation error versus $n$ for functions $f_1$, $f_2$, $f_3$ and $f_4$ via $\mathcal{Q}^{\varepsilon,\gamma}_{m,n,\alpha,\beta}$ using various values of $\gamma$, i.e., $\gamma=1.2$ (red square), $\gamma=1.5$ (blue circle), $\gamma=2$ (magenta triangle) and $\gamma=2.5$ (black plus).
The red line indicates $\theta_1^{-n}$, the green line indicates $\theta_2^{-n}$, the blue line indicates $\theta_3^{-n}$ and the black line indicates $\theta_4^{-n}$.
\label{three}}
\end{figure}

In fact, Adcock and Shadrin concluded that the limiting accuracy \eqref{apperror} gets better with increasing the oversampling parameter $\eta$ for Legendre frame approximation. They also noticed that increasing $\eta$ makes less difference to the accuracy when $\varepsilon$ is quite smaller. These two conclusions still hold for the Jacobian frame approximation. Hence, we fix $\varepsilon=10^{-14}$ and take $\eta=4$ in following sections, and we focus on the influence of parameters $\gamma,\alpha,\beta$.

\subsection{The influences of parameter $\gamma$}
In Figure~\ref{three}, we plot the uniform Jacobi frame approximation error versus $n$ of functions $f_1$, $f_2 = 1/(1+4x^2)$, $f_3= 1/(10-9x)$ and $f_4 =25\sqrt{9x^2-10}$ for different values of the extended domain parameter $\gamma$. These functions are analytic in Bernstein ellipses $B(E_\theta)$ with parameters $\theta_1$, $\theta_2=(1+\sqrt{5})/2$, $\theta_3 = (10+\sqrt{19})/9$ and $\theta_4 = \sqrt{10/9}+1/3$ respectively.

We do witness exponential decrease of the error down to some fixed limiting accuracy for function $f_1$, which is analytic in Bernstein ellipse $B(E_{\theta_1})$ that is large enough to contain the extended interval $[-\gamma,\gamma]$. Moreover, the error is larger when $\gamma$ is smaller, and the error is smaller when $\gamma$ is larger. On the contrary, for functions $f_2$ $f_3$ and $f_4$ that are not analytic in complex regions containing the extended interval $[-\gamma,\gamma]$, we see that the error still decays with exponential rate but only down to a larger tolerance, and the approximation accuracy with smaller value $\gamma = 1.2$ is better rather than a larger value $\gamma=2.5$.

We then investigate the Jacobi frame approximation error with various $\gamma$ of differentiable functions $f_5 = |x|$, $f_6 = |\sin x|^3$, $f_7 = |x-1/2|^5$ and $f_8 = |x-1/4|^{3/2}$ in Figure~\ref{four}. Here we use much larger values of $\gamma$. When approximating differentiable functions with a larger $\gamma$, we obviously observe that the approximation accuracy will lost. For instance, for the $3$-times differentiable function $f_5$, the accuracy when $\gamma = 15$ is one order of magnitude worse than the accuracy when $\gamma=2$. Moreover, by comparing the error of each approximated function in Figure~\ref{three} and Figure~\ref{four}, we find that the higher the smoothness of the approximated function, the more significant the loss of approximation accuracy.

\begin{figure}[bh]
\centering
\subfigure[$f_5$]
{\psfig{file=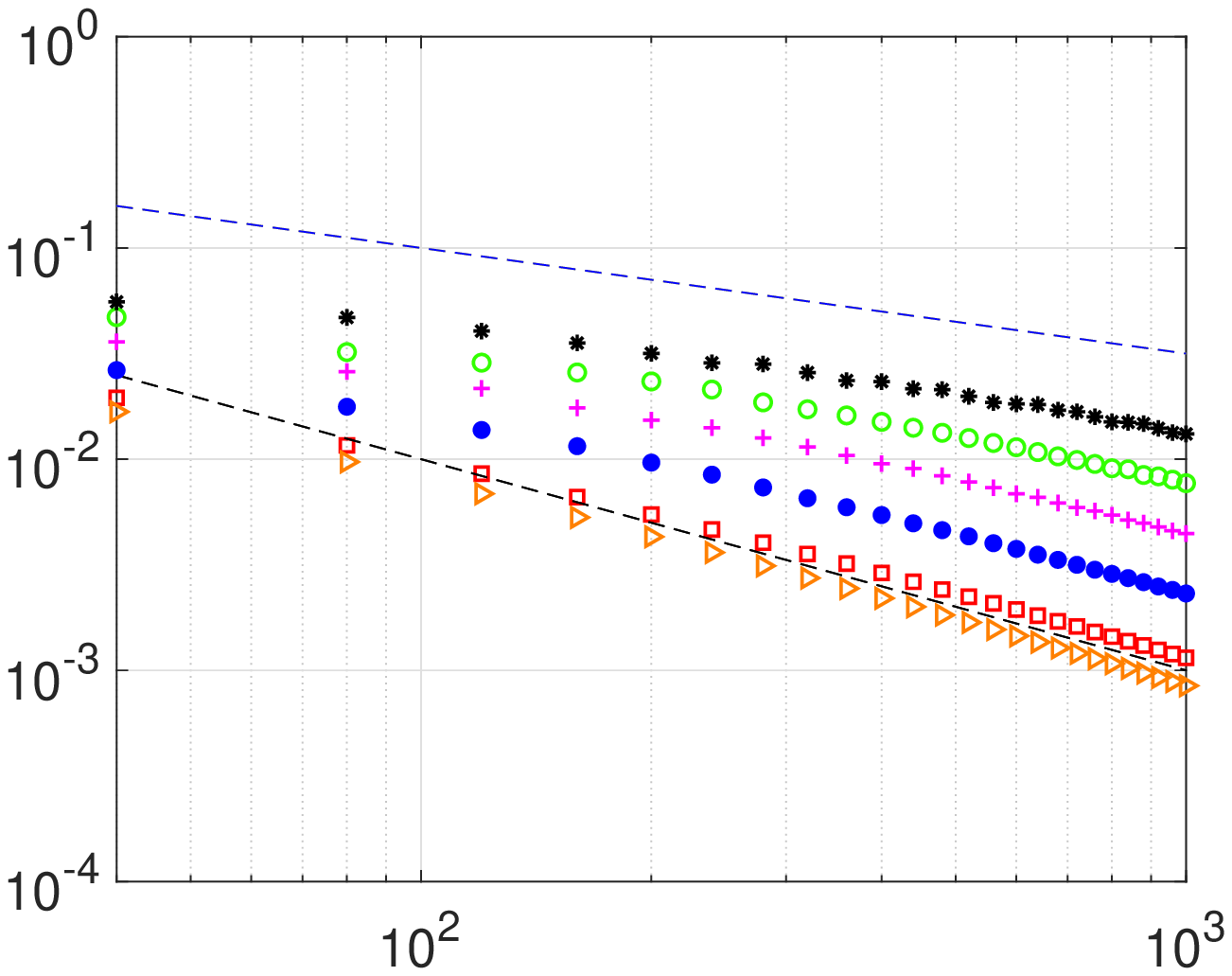,width=5cm}}
\quad
\subfigure[$f_6$]
{\psfig{file=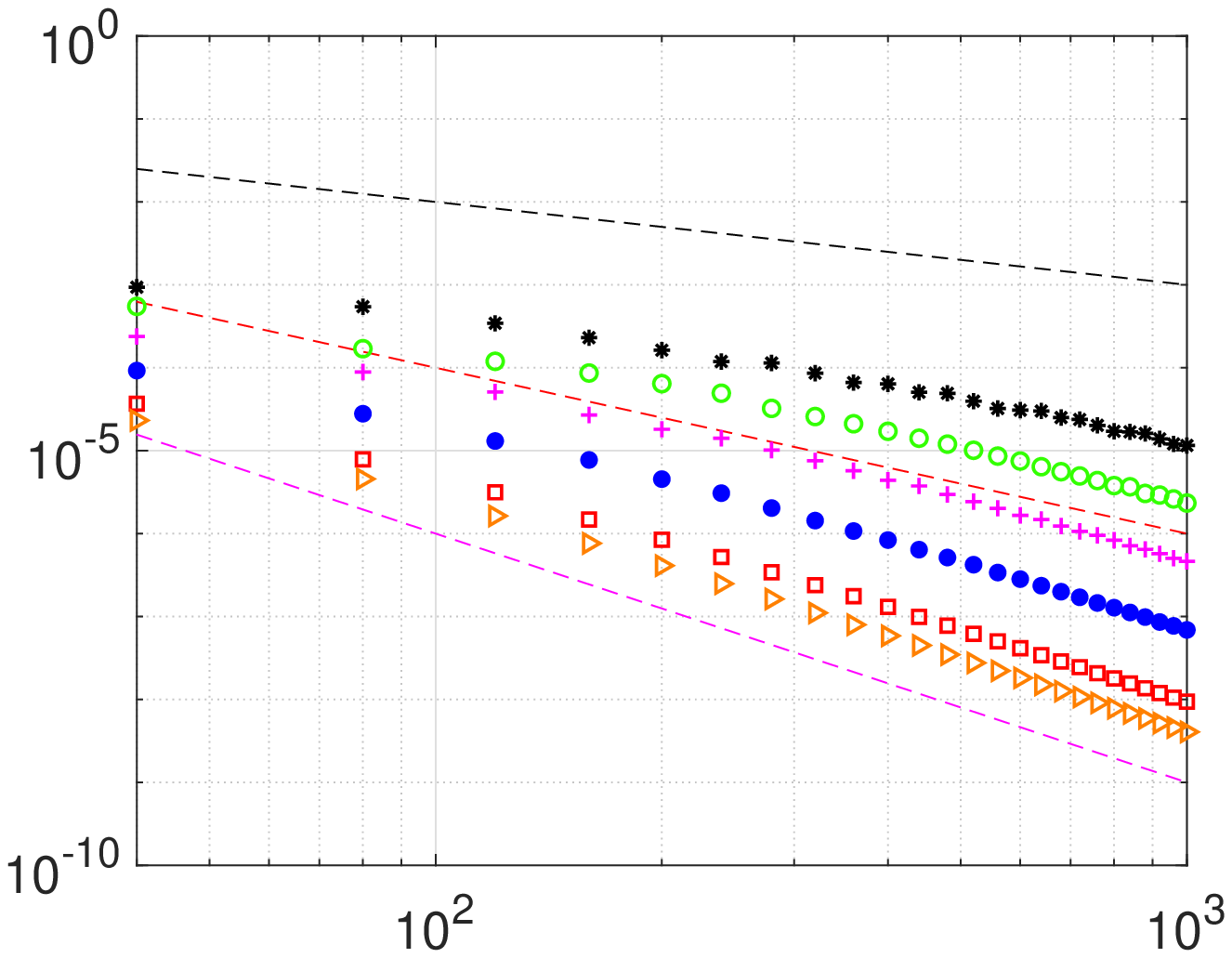,width=5cm}}
\quad
\subfigure[$f_7$]
{\psfig{file=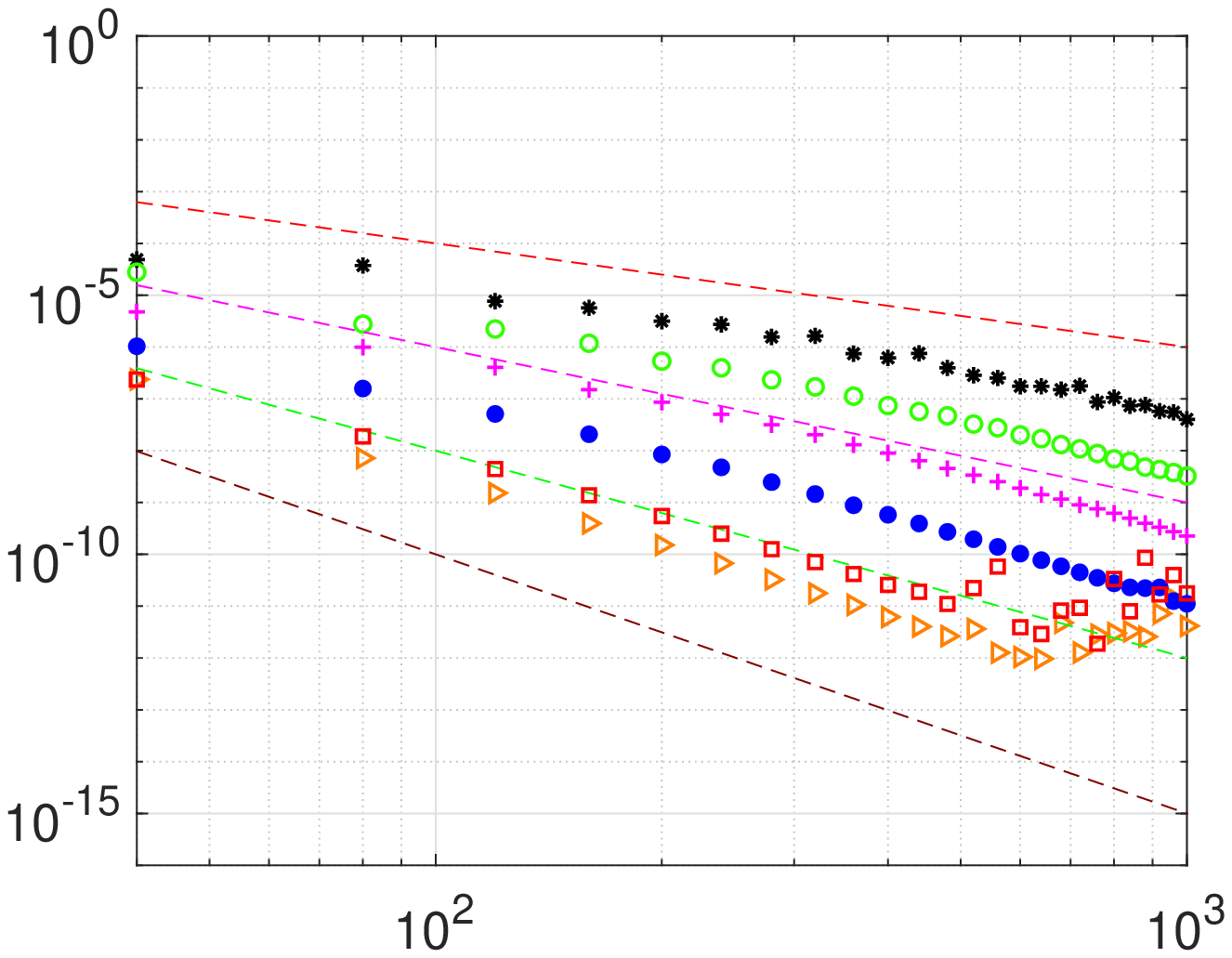,width=5cm}}
\quad
\subfigure[$f_8$]
{\psfig{file=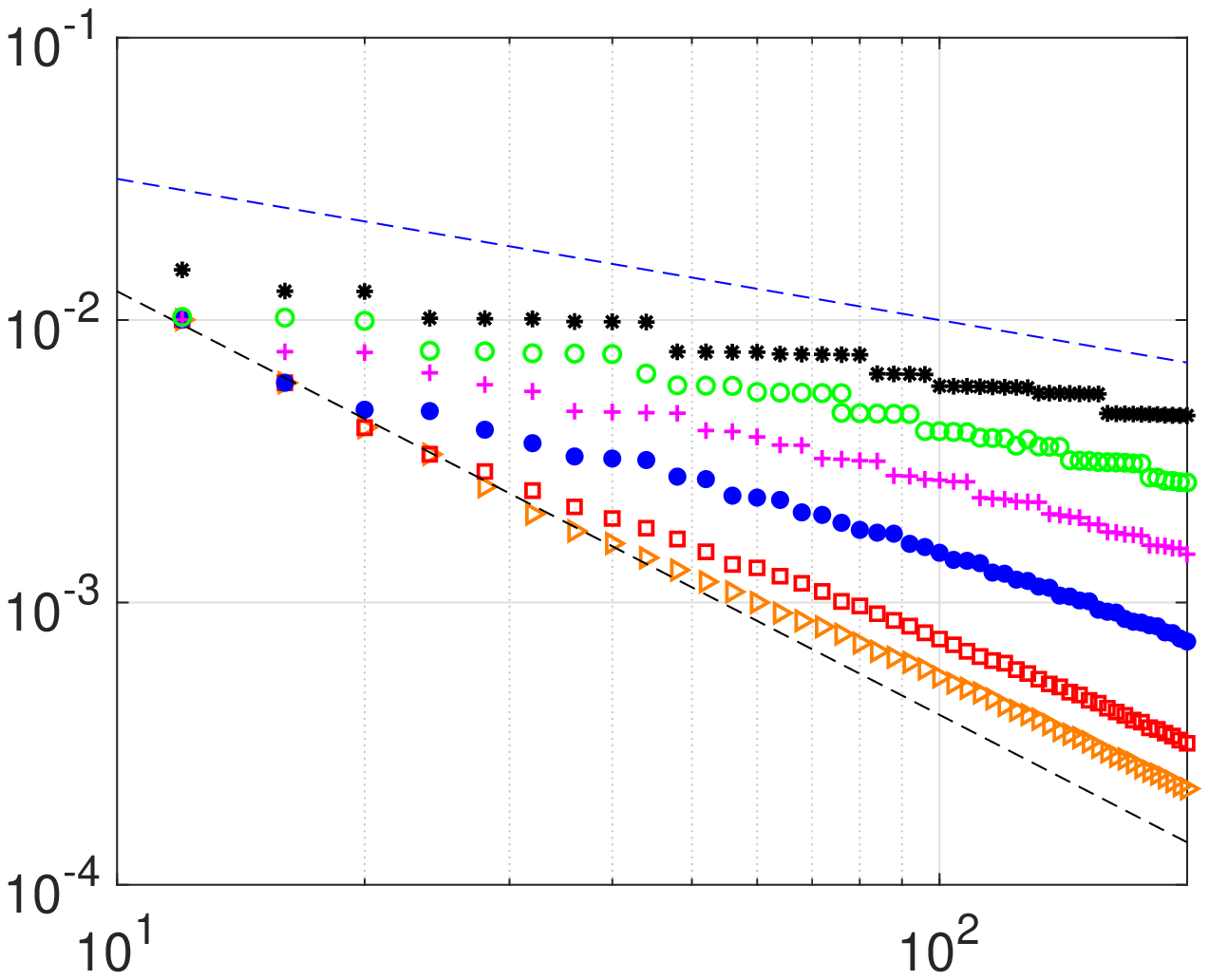,width=5cm}}
\vspace*{8pt}
\caption{Uniform Jacobi frame approximation error versus $n$ for functions $f_5$, $f_6$, $f_7$, $f_8$ via $\mathcal{Q}^{\varepsilon,\gamma}_{m,n,\alpha,\beta}$ using various values of $\gamma$, i.e., i.e., $\gamma=1.5$ (orange triangle), $\gamma=2$ (red square), $\gamma=4$ (blue dot), $\gamma=8$ (magenta plus), $\gamma=15$ (green circle), $\gamma=30$ (black star).
The blue line indicates the quantity $n^{-1/2}$, the black line in subfigure $(a)$ indicates the quantity $n^{-1}$, the red line indicates the quantity $n^{-2}$, the magenta line indicates the quantity $n^{-3}$, the green line indicates the quantity $n^{-4}$, the crimson line indicates the quantity $n^{-5}$, the black line in subfigure $(d)$ indicates the quantity $n^{-3/2}$.
\label{four}}
\end{figure}

\begin{figure}[th]
\centering
\subfigure[$f_1$]
{\psfig{file=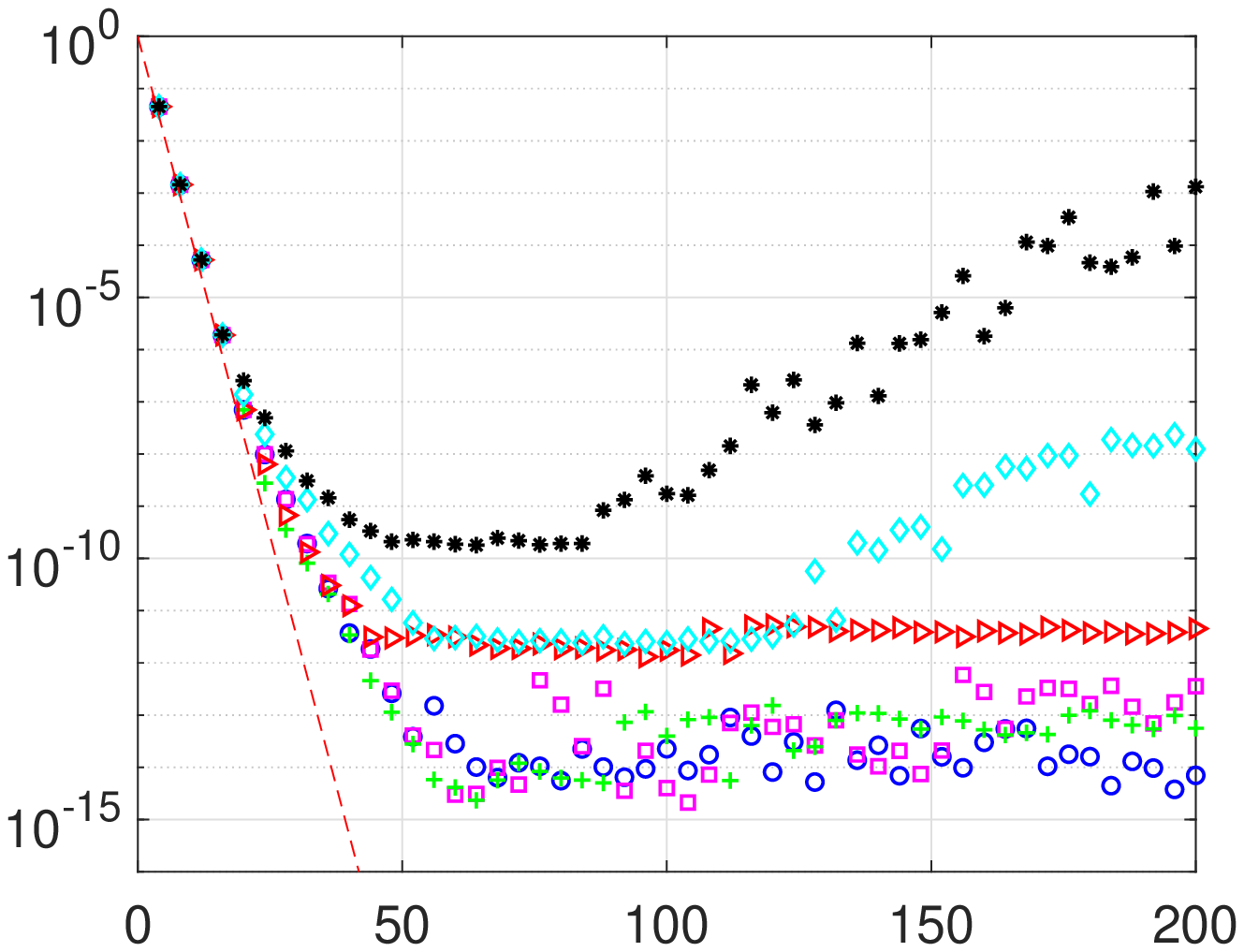,width=5cm}}
\quad
\subfigure[$f_2$]
{\psfig{file=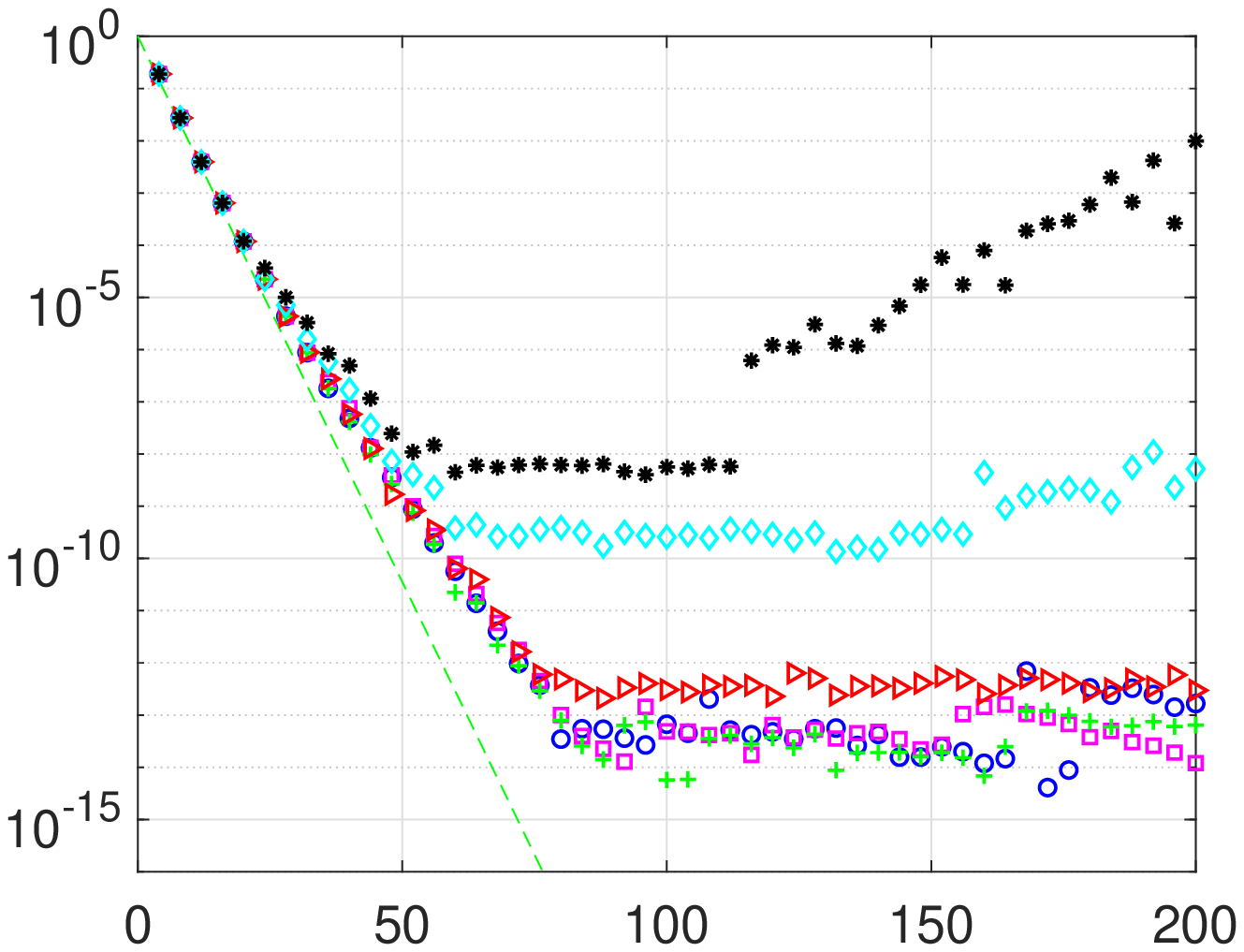,width=5cm}}
\quad
\subfigure[$f_3$]
{\psfig{file=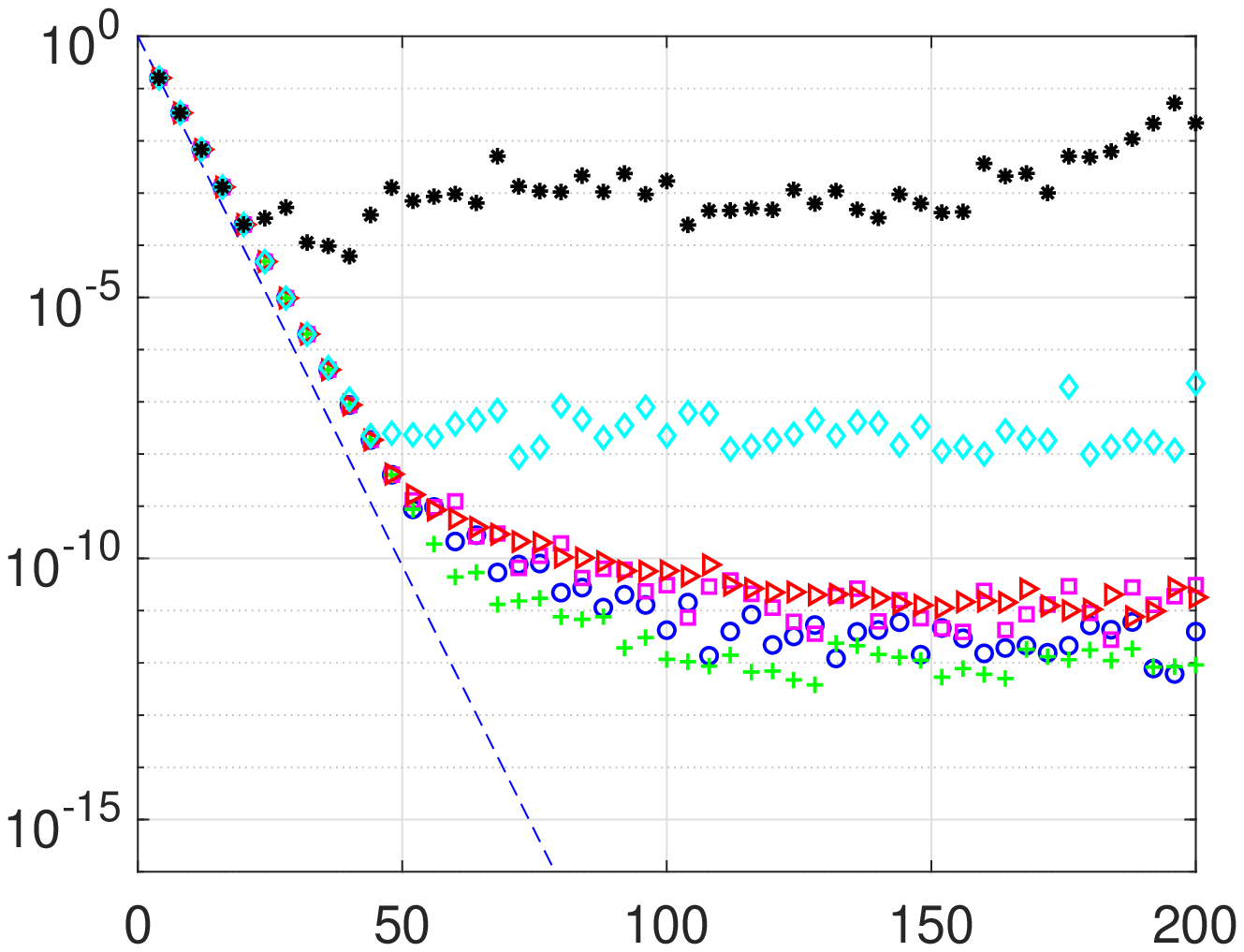,width=5cm}}
\quad
\subfigure[$f_4$]
{\psfig{file=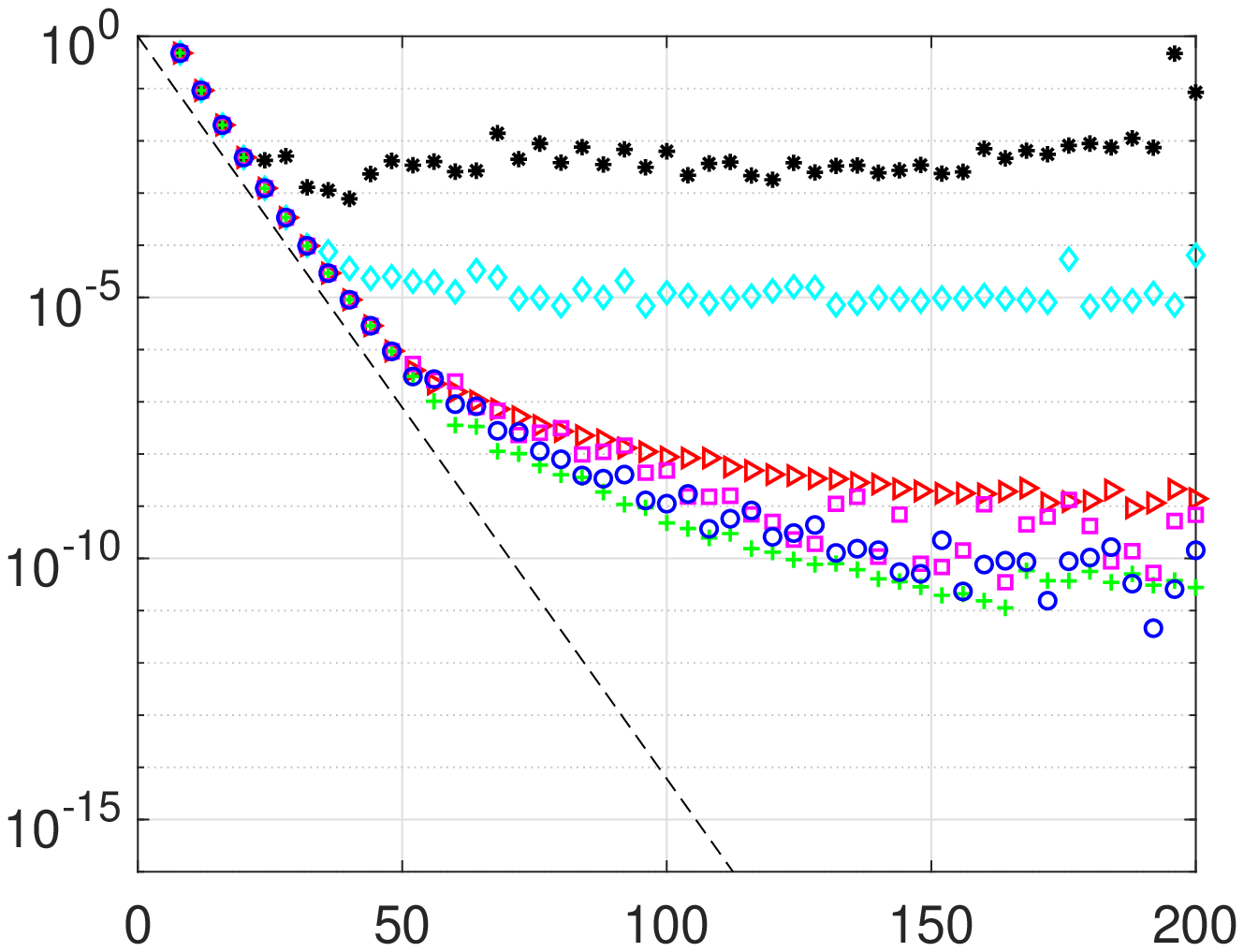,width=5cm}}
\vspace*{8pt}
\caption{Uniform Jacobi frame approximation error versus $n$ for functions $f_1$, $f_2$, $f_3$, $f_4$ via $\mathcal{Q}^{\varepsilon,\gamma}_{m,n,\alpha,\beta}$ using various values of $(\alpha,\beta)$, i.e., $(\alpha,\beta)=(1/3,1/2)$ (blue circle), $(\alpha,\beta)=(-1/3,-2/3)$ (magenta square), $(\alpha,\beta)=(2,5/2)$ (green plus), $(\alpha,\beta)=(5,10)$ (red triangle), $(\alpha,\beta)=(15,1)$ (cyan diamond) and $(\alpha,\beta)=(0,20)$ (black star). The red line indicates $\theta_1^{-n}$, the green line indicates $\theta_2^{-n}$, the blue line indicates $\theta_3^{-n}$ and the black line indicates $\theta_4^{-n}$. \label{five}}
\end{figure}

\subsection{The influences of parameters $\alpha$ and $\beta$}
Then we investigate the influences of parameters $\alpha$ and $\beta$ on the error decay. For functions $f_1$, $f_2$, $f_3$ and $f_4$, we fix the extended domain parameter $\gamma = 2.5$, $1.5$, $1.2$, $1.2$ respectively. As shown in Figure~\ref{five}, it can be found that when $\max\{\alpha,\beta\}>10$, parameters $\alpha$ and $\beta$ begin to affect the decay of error. The accuracy of the approximation will become worse or even diverge. This is consistent with \eqref{apperror}. Meanwhile, we observe the similar phenomenons for differentiable functions $f_5$, $f_6$, $f_7$ and $f_9 = |x+1/2|^7$ with other six sets of parameters $(\alpha,\beta)$ and fixed $\gamma=2$ in Figure~\ref{six}.

\clearpage
\begin{figure}[th]
\centering
\subfigure[$(\alpha, \beta)= (1/3,-1/2)$]
{\includegraphics[width=5cm]{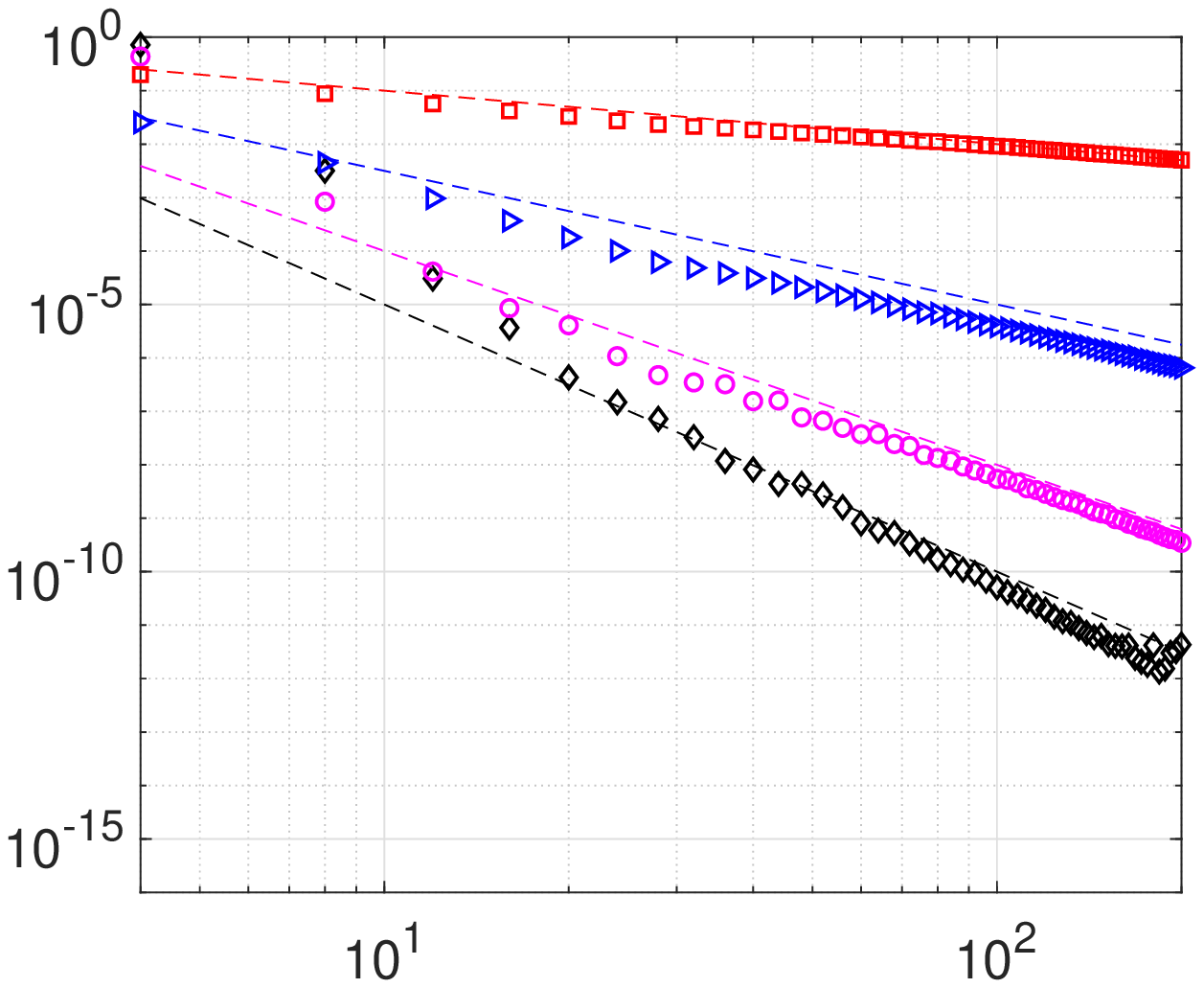}}
\subfigure[$(\alpha, \beta)=(-1/3,1)$]
{\includegraphics[width=5cm]{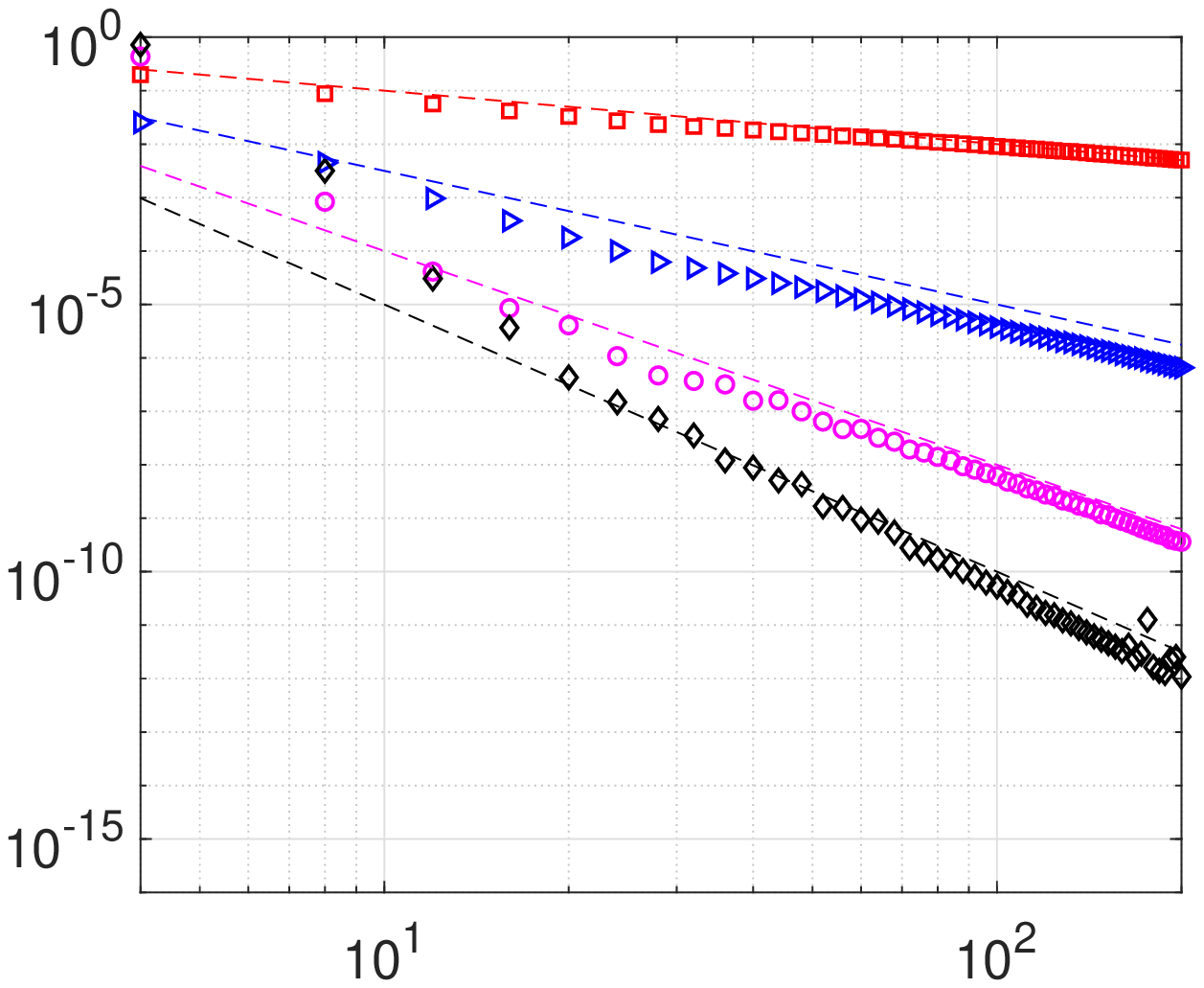}}
\subfigure[$(\alpha, \beta)= (1,2)$]
{\includegraphics[width=5cm]{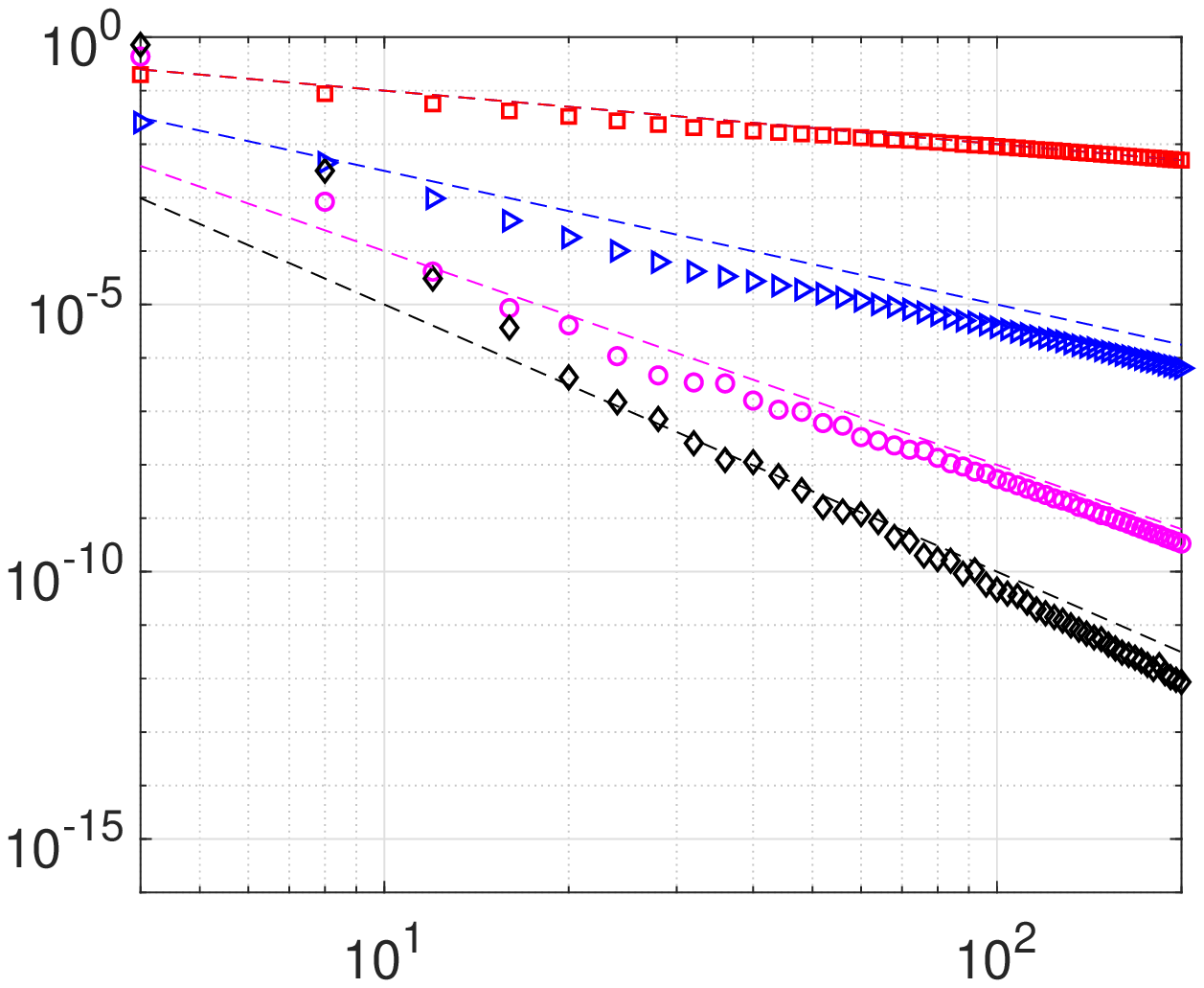}}
\subfigure[$(\alpha, \beta)= (10,2)$]
{\includegraphics[width=5cm]{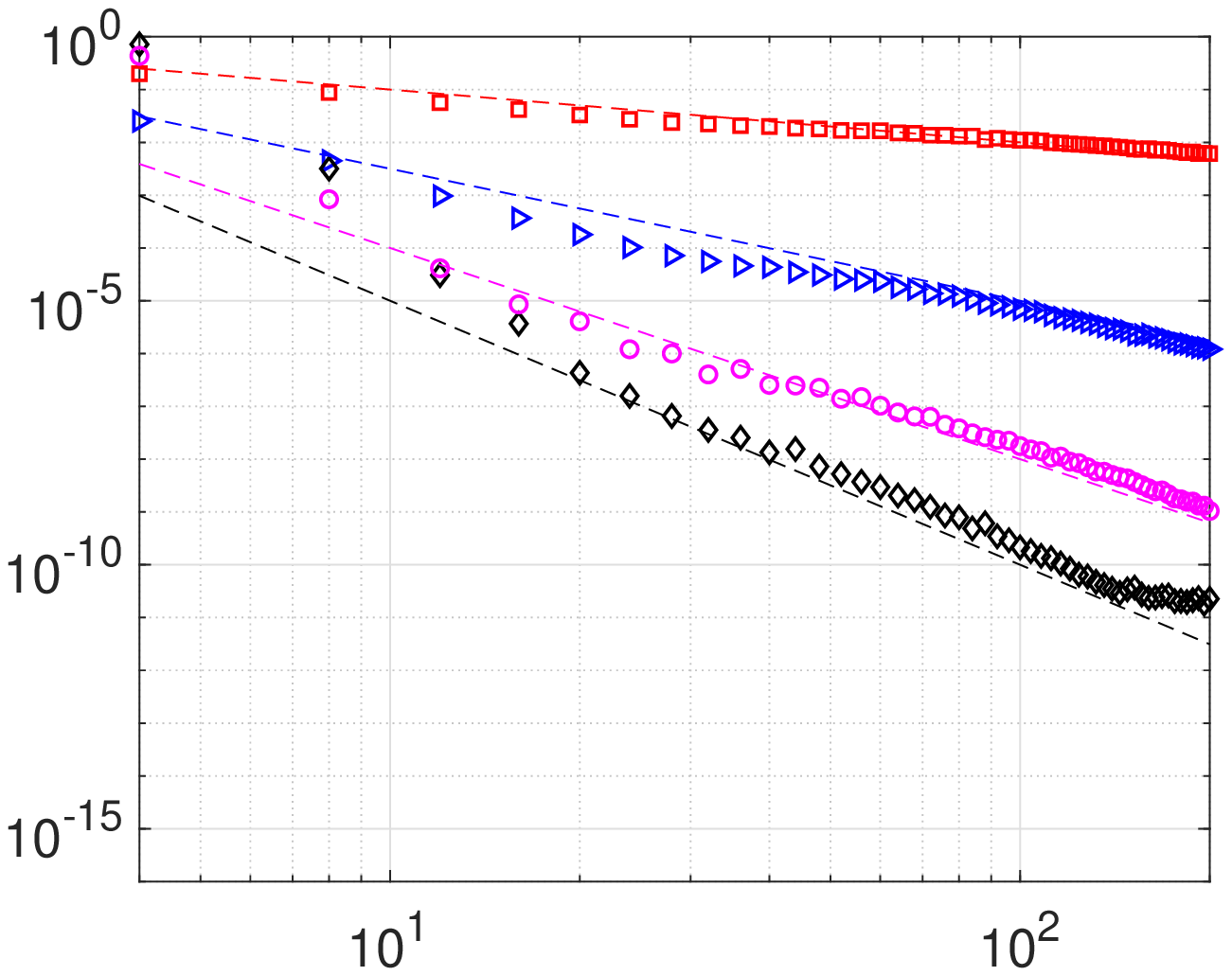}}
\subfigure[$(\alpha, \beta)= (1,15)$]
{\includegraphics[width=5cm]{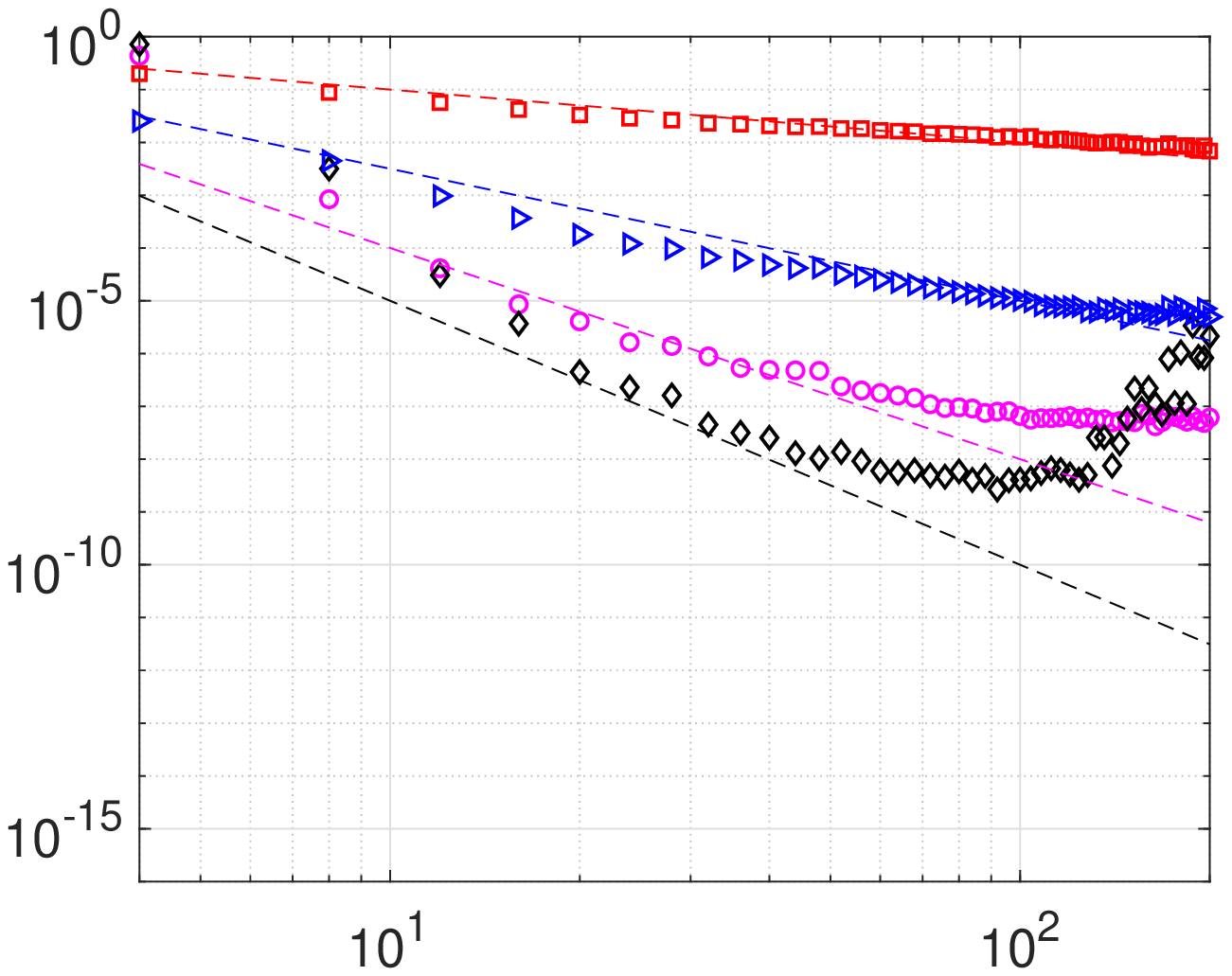}}
\subfigure[$(\alpha, \beta)= (20,0)$]
{\includegraphics[width=5cm]{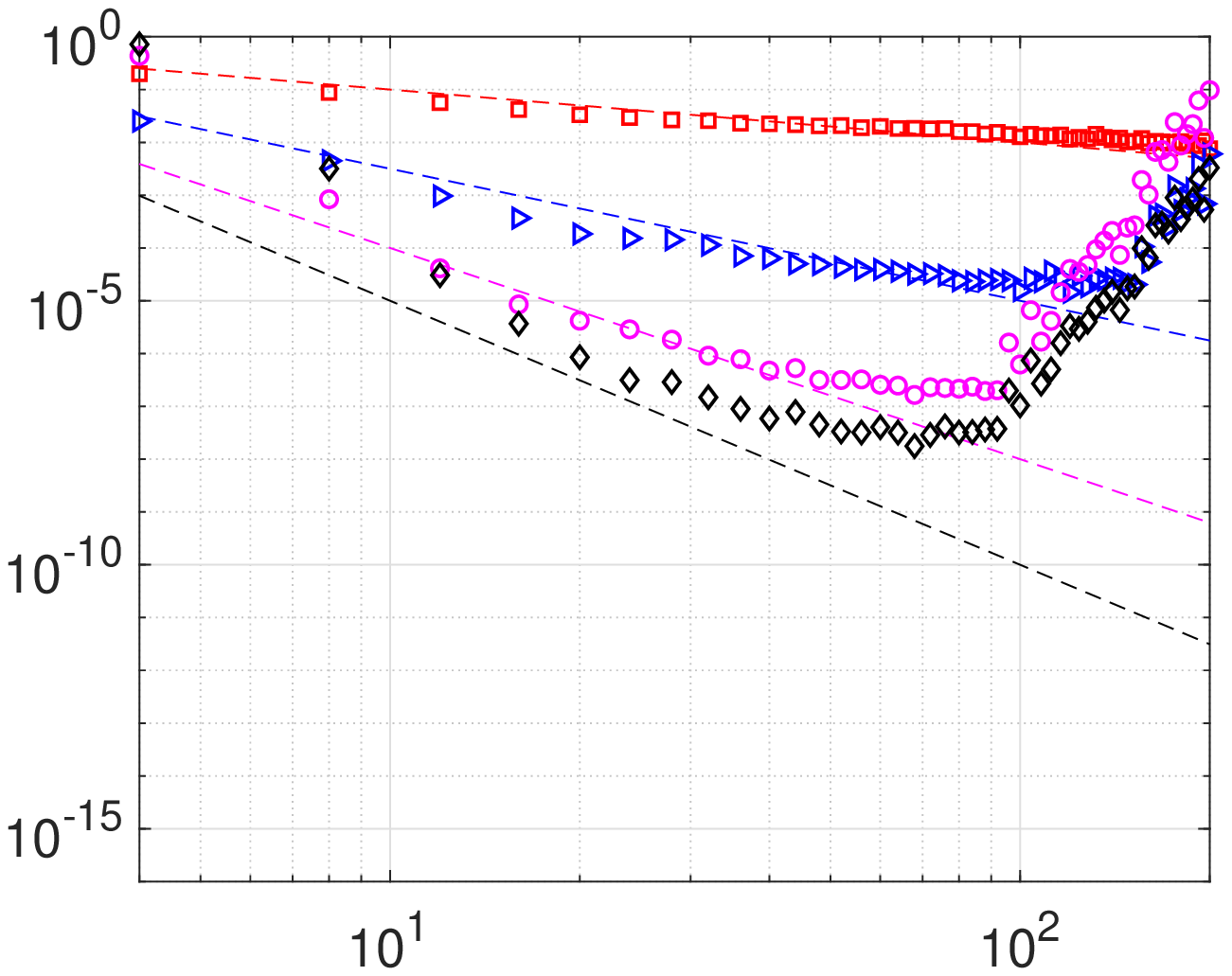}}
\vspace*{8pt}
\caption{Uniform Jacobi frame approximation error versus $n$ for functions $f_5$ (red square), $f_6$ (blue triangle), $f_7$ (magenta circle) and $f_9$ (black diamond) via $\mathcal{Q}^{\varepsilon,\gamma}_{m,n,\alpha,\beta}$ using various values of $(\alpha,\beta)$. The red line indicates the quantity $n^{-1}$, the blue line indicates the quantity $n^{-5/2}$, the magenta line indicates the quantity $n^{-4}$, the black line indicates the quantity $n^{-5}$. \label{six}}
\end{figure}

\end{document}